\documentclass[12pt]{article}
\usepackage[english]{babel}
\usepackage{amsmath,amssymb,amsthm,exscale,amsopn,mathtools,fullpage}

\newtheorem{theorem}{Theorem}[section]
\newtheorem{lemma}[theorem]{Lemma}
\newtheorem{proposition}[theorem]{Proposition}
\newtheorem{corollary}[theorem]{Corollary}

\newcommand{\supp}{\operatorname{supp}}

\newcommand{\GK}{\operatorname{GKdim}}

\newcommand{\gr}{\operatorname{gr}}

\newcommand{\s}{\subseteq}
\newcommand{\arr}{\xrightarrow}

\DeclareMathOperator{\HK}{HK}

\begin{document}

    \title{Irreducible representations of Hecke--Kiselman monoids }

    \author{Magdalena Wiertel}
    \date{}
    \maketitle

    \begin{abstract}
        Let $K[\HK_{\Theta}]$ denote the Hecke--Kiselman algebra of a finite oriented graph $\Theta$
        over an algebraically closed field $K$.
        All irreducible representations, and the corresponding
        maximal ideals of $K[\HK_{\Theta}]$, are characterized in case this algebra satisfies a
        polynomial identity. The latter condition corresponds to a simple condition that can be expressed in terms
        of the graph $\Theta$.
        The result shows a surprising similarity to the classical results on representations of finite
        semigroups; namely
        every representation either comes form an idempotent in the Hecke--Kiselman monoid $\HK_{\Theta}$
        (and hence it is $1$-dimensional),
        or it comes from certain semigroup of matrix type (which is an order in a completely $0$-simple
        semigroup over an infinite cyclic group).
        The case when $\Theta$ is an oriented cycle plays a crucial role; the prime spectrum of
        $K[\HK_{\Theta}]$ is completely characterized in this case.
    \end{abstract}

    \vspace{20pt}

    \noindent\textbf{2010 Mathematics Subject Classification}: 16S15,
   16S36, 16P40, 16G99, 20M05, 20C08, 05C25.

    \noindent\textbf{Key words}: Hecke--Kiselman algebra, monoid, irreducible representations,
    algebra of matrix type, prime ideals

\section{Introduction}

For any finite simple oriented graph $\Theta$ with $n$ vertices, a finitely
generated monoid $\HK_{\Theta}$ was introduced by Ganyushkin and
Mazorchuk in \cite{maz} as the monoid generated by $n$ idempotents
satisfying the following relations, depending on the graph
$\Theta$.
\begin{itemize}
    \item[(i)] $\HK_{\Theta}$ is generated by elements $ x_i=x_{i}^2$,
    where $1 \leqslant i \leqslant n$,
    \item[(ii)] if the vertices $i$, $j$ are not connected in  $\Theta$,
    then  $x_ix_j = x_jx_i$,
    \item[(iii)] if $i$, $j$ are connected by an arrow $i \to j$ in $\Theta$,
    then $x_ix_jx_i = x_jx_ix_j = x_ix_j$.
\end{itemize}
The definition is a generalization of two different classes,
namely $0$--Hecke and Kiselman's monoids. In particular,
Hecke--Kiselman monoids are natural homomorphic images of the
corresponding $0$--Hecke monoids (that is, monoids where relations
(iii) are replaced by $x_ix_jx_i=x_jx_ix_j$) that play an
important role in representation theory and algebraic
combinatorics. Therefore the investigation of the structure and
representations of Hecke--Kiselman monoids can shed a new light on
problems related to the latter class.

Several combinatorial and structural properties of Hecke--Kiselman
monoids have been already studied. Some aspects of representations
in the case of finite monoids were considered in \cite{maz} and
\cite{type A}. In particular, as an illustration of the
representation theory of finite $\mathcal{J}$--trivial monoids,
all simple modules were described in this case. Note also that
certain representations by matrices with integral coefficients
were used in \cite{aragona} to show that the monoid associated to
an oriented cycle is infinite. Generalizations of this
representation have been recently applied in \cite{Lebed} to
construct combinatorial and diagrammatic interpretations of
elements of this monoid.


The investigation of ring theoretic properties of the infinite
dimensional Hecke--Kiselman algebras was started in
\cite{mecel_okninski1}. Results obtained therein and in
\cite{semiprime, OW} suggest that the structure of the
Hecke--Kiselman monoid associated to the oriented cycle of length
$n$ plays a crucial role in understanding the structure of the
algebras $K[\HK_{\Theta}]$, over a field $K$, for arbitrary
oriented graphs $\Theta$.

In this paper we focus on classical representations of Hecke--Kiselman algebras.

The main part is devoted to irreducible representations and prime
spectrum of the algebra associated to an oriented cycle. Then
maximal ideals in the case of any PI Hecke--Kiselman algebra are
described. Roughly speaking, maximal ideals of $K[\HK_{\Theta}]$
come from maximal ideals of the algebras associated to connected
components of certain subgraph of $\Theta$ that are either
singletons or oriented cycles.

Our techniques rely on the surprising ideal chain and structures
of matrix type hidden in the monoids $C_n$ associated to oriented cycles, characterized in \cite{OW}. The structures
occurring in the quotients have the flavour of the principal
factors of a finite semigroup, thus we are able to build a class
of irreducible representations of $C_n$ from the representations
of these factors. It turns out that the remaining representations
arise from idempotents in $C_n$, in a way similar to that known
for the so called $\mathcal J$-trivial finite monoids, see
\cite{denton, steinberg}.

Let us introduce some notation and recall necessary results. $C_n$
is the Hecke--Kiselman monoid associated to the oriented cycle of
length $n$, that is the graph
$1\arr{}2\arr{}\cdots\arr{}n-1\arr{}n\arr{}1$. Denote the
corresponding generators of $C_n$ by $x_1,\ldots, x_n$. $K[C_n]$
stands for the associated monoid algebra over an algebraically
closed field $K$. In the results from \cite{OW} the family of the
words $x_nx_1\cdots x_{i}x_{n-1}\cdots x_{i+1}$, for $i=0,\ldots,
n-2$, plays a crucial role. We denote these words by $x_nq_i$, or
$s_i$, and by $\langle x_nq_i \rangle$ we understand the cyclic
monoid generated by $x_nq_i$. We recall the result characterizing
the structure of the Hecke--Kiselman monoid $C_n$, that will be
crucial in our approach to representation theory of $K[C_n]$,
proved in \cite{OW}, Corollary~3.25 and \cite{semiprime}, proof of
Theorem~1.1.

\begin{proposition}[\cite{OW}]\label{struktura}
    $C_n$ has a chain of ideals
    \begin{equation*}\emptyset =I_{n-2} \vartriangleleft  I_{n-3}\vartriangleleft \cdots \vartriangleleft I_0\vartriangleleft
    I_{-1}  \vartriangleleft C_n
    \end{equation*}
    given by $I_i=\{x\in C_n: C_nxC_n\cap \langle x_nq_i\rangle=\emptyset\}$ for $i\neq -1$ and
    $I_{-1}=I_0\cup C_n x_nq_{0}C_n$,
    \\with the following properties
    \begin{enumerate}
        \item for $i=0,\ldots, n-2$ there exist semigroups of matrix type
        $M_i=\mathcal{M}^0(S_i, A_i, B_i; P_i)$,
        such that $M_i\s I_{i-1}/I_i$, where $S_{i}$ is the cyclic semigroup generated
        by $x_nq_i$, $P_i$
        is a square symmetric matrix of size $B_i\times A_i$ and with coefficients in
        $\langle x_nq_i\rangle\cup\{\theta\}$;
        \item for $i=1,\ldots, n-2$ the sets $(I_{i-1}/I_i)\setminus M_i$ are finite;
        \item $I_{-1}/I_0=M_0$;
        \item $\tilde{M}_{n-2}=M_{n-2}\setminus \{\theta\}\vartriangleleft C_n$;
        \item  $C_n/I_{-1}$ is a finite semigroup;
        \item every $M_i$ is a two-sided ideal in $C_n/I_i$;
        \item $|A_i|=|B_i|= {n\choose i+1}$.
    \end{enumerate}
\end{proposition}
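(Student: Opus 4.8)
Although Proposition~\ref{struktura} is quoted from \cite{OW}, with part of the statement taken from \cite{semiprime}, let me sketch how I would arrive at it. The plan is to base everything on a combinatorial normal form for the elements of $C_n$. Starting from the defining presentation I would run a rewriting analysis of the relations (ii)--(iii) to produce a canonical word representative for each $w\in C_n$, arranged so that the distinguished words $s_i=x_nq_i$ get collected into a single block. The outcome I expect is a dichotomy: either $w$ lies in a fixed finite set of ``short'' elements, or $w$ has a unique factorisation $w=u\,s_i^{\,k}\,v$ with $k\geqslant 1$ and with $u,v$ reduced words whose lengths are bounded independently of $k$. The largest index $i$ for which such a factorisation exists, together with the triple $(u,k,v)$, is the invariant that will organise the whole stratification.

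Granting the normal form, I would first record the easy structural facts. Each $I_i=\{x\in C_n:C_nxC_n\cap\langle x_nq_i\rangle=\emptyset\}$ is automatically a two-sided ideal, since $C_n(yxz)C_n\s C_nxC_n$ for all $y,z$, and $I_{-1}=I_0\cup C_nx_nq_0C_n$ is an ideal as the union of $I_0$ with a principal ideal. For the chain $I_{n-2}\s I_{n-3}\s\cdots\s I_0\s I_{-1}\s C_n$ I would check, for each $i$, that some power of $s_i$ lies in $C_ns_{i-1}C_n$ and that powers of $s_{i-1}$ can be absorbed into powers of $s_i$; then a power of $s_{i-1}$ in a principal ideal forces a power of $s_i$ there too, giving $I_i\s I_{i-1}$. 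The bottom equality $I_{n-2}=\emptyset$ is the assertion that \emph{every} principal ideal of $C_n$ absorbs a power of $s_{n-2}$, which I would read off directly from the normal form: a ``long'' element already grows into such a power, and each of the finitely many ``short'' ones can be multiplied up into one by inspection. The layer $I_{i-1}/I_i$ is then, by definition, the set of classes whose principal ideal meets $\langle s_i\rangle$ but none of the $\langle s_j\rangle$ for the earlier indices, and the normal form identifies it with the elements $u\,s_i^{\,k}\,v$ together with a finite residue --- no residue when $i=0$ --- which is (2) and (3); and $C_n/I_{-1}$ becomes the finite set of short elements never reaching $\langle s_0\rangle$, which is (5).

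Next I would establish the matrix-type structure (1). Fix $i$ and set $S_i=\langle s_i\rangle$, an infinite cyclic semigroup. Using the normal form, the ``left ends'' $u$ and ``right ends'' $v$ occurring in the factorisations fall, modulo the equivalence induced by multiplication, into two finite index sets $A_i$ and $B_i$, and the product of $u\,s_i^{\,k}\,v$ with $u'\,s_i^{\,k'}\,v'$ is either again of the form $u\,s_i^{\,k+k'+e}\,v'$, with the shift $e=e(v,u')$ depending only on the adjacent ends, or else drops into $I_i$. This is precisely a copy of $\mathcal{M}^0(S_i,A_i,B_i;P_i)$ inside $I_{i-1}/I_i$, with the entry $P_i(v,u')$ recording $s_i^{\,e}$ or the symbol $\theta$. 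Symmetry of $P_i$ I would obtain from an anti-automorphism of $C_n$: reversing words together with the relabelling of the cycle that identifies $C_n^{op}$ with $C_n$ (the reversed oriented cycle is again an oriented cycle) gives an anti-automorphism, which interchanges left and right ends and forces $P_i$ to be symmetric under the induced identification of $A_i$ with $B_i$. That each $M_i$ is a two-sided ideal of $C_n/I_i$ (6) is the statement that multiplying a layer-$i$ element by anything stays in $M_i$ or lands in $I_i$, again visible from the normal form, and (4) --- $\tilde M_{n-2}=M_{n-2}\setminus\{\theta\}\vartriangleleft C_n$ --- is the special case $i=n-2$, which is clean because $I_{n-2}=\emptyset$. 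For the count (7) I would exhibit a bijection between the admissible left (resp.\ right) ends and the $(i+1)$-element subsets of $\{1,\dots,n\}$ that can appear in a reduced prefix (resp.\ suffix) adjacent to the block $s_i^{\,k}$, which gives $|A_i|=|B_i|=\binom{n}{i+1}$.

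The main obstacle will be the first step together with the explicit form of $P_i$: proving that the rewriting for $C_n$ is confluent enough to yield a genuinely unique normal form, and then working out how the end data $(v,u')$ of two factored elements combine --- in particular, deciding exactly when a product leaves the layer --- will take a long and careful combinatorial case analysis. Everything after that (the ideal properties, the finiteness claims, the symmetry, and the counting) should be comparatively routine once the normal form and the multiplication rule for the ends are in hand.
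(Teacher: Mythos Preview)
The paper does not give a proof of this proposition: it is stated as background and explicitly attributed to \cite{OW}, Corollary~3.25, together with the proof of Theorem~1.1 in \cite{semiprime}. There is therefore no in-paper argument to compare your sketch against.

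That said, your outline is a reasonable summary of the strategy actually used in \cite{OW}. The main ingredients there are indeed a confluent rewriting system producing canonical forms, an explicit combinatorial description of the finite sets $A_i$ and $B_i$ as certain reduced prefixes/suffixes, and the anti-automorphism of $C_n$ (reversal composed with relabelling the cycle) to obtain the symmetry of $P_i$. One point you should be aware of: \cite{OW} does not work with the rewriting system in isolation. A faithful representation $f:C_n\to\mathrm{Map}(\mathbb{Z}^n,\mathbb{Z}^n)$, which the present paper invokes later in the proof of Proposition~2.4, is used alongside the normal form to separate elements and to control the ideals $I_i$ via the ``support'' invariant $|\supp(f(w))|$. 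In particular, the inclusions $Q_i\subseteq I_i$ coming from this representation handle part of what you plan to extract purely from rewriting. Your sketch is not wrong, but in practice the representation $f$ carries a substantial share of the load, and proving confluence and the uniqueness of the factorisation $u\,s_i^k\,v$ directly, without that auxiliary tool, would likely be harder than you anticipate.
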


Semigroups of matrix type $M_i\s I_{i-1}/I_i$ as sets consist of
elements from $(I_{i-1}\setminus I_{i})\cup \{\theta\}$, where
$\theta$ is the zero element, and thus (without $\theta$) can be
treated as subsets of $C_n$, denoted also by $M_i$, abusing slightly the notation. We define
$M=\bigcup_{i=0}^{n-2}M_i$. It has been proved that
$C_n\setminus M$ is finite. Note that $M$ is not a subsemigroup of
$C_n$. By $K_0[M_i]$ we denote the contracted semigroup algebra
associated to $M_i$, understood as a subsemigroup of
$I_{i-1}/I_{i}$. Moreover $K[M_i]$ is the subspace spanned by
elements of $M_i$ treated as the set of words in $C_n$, if
unambiguous.

The paper is organized as follows. In Section~\ref{idempotents} we
revisit the idempotents in $C_n$, characterized in a more general
case in \cite{maz}, to place them in the ideal structure of the
monoid. Section~\ref{primespectrum} is devoted to the prime
spectrum of the algebra $K[C_n]$. The main result characterizing
irreducible representations of $K[C_n]$ is proved in Section~\ref{repC_n}, Theorem~\ref{irred main}. Then we calculate the latter representations
(Section~\ref{repM}) and illustrate the results in the simplest case of the
monoid associated to the graph that is a cycle of length $3$. In the last section
we describe maximal ideals of arbitrary Hecke--Kiselman algebras
which satisfy a polynomial identity.

\section{Idempotents in $C_n$}\label{idempotents}
We are going to characterize all idempotents in the
Hecke--Kiselman monoid associated to the cycle of length $n$. This
will be an intermediate step in our approach to a description of
all irreducible representations of this monoid. Firstly, let us
recall a more general theorem about idempotents in Hecke--Kiselman
monoids associated to any oriented graph.
\begin{theorem}[\cite{type A}, Theorem 3.6]\label{subgraphs}
    There is a bijection between idempotents in $\HK_{\Gamma}$ and full subgraphs
    of $\Gamma$ which do not contain any oriented cycles.
\end{theorem}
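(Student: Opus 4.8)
The plan is to make the bijection explicit and then reduce everything, via retractions onto full subgraphs, to the single case of an oriented cycle, where Proposition~\ref{struktura} takes over. To an acyclic full subgraph $\Gamma_S$ (the full subgraph on a vertex set $S$) assign the element $e_S:=x_{i_1}x_{i_2}\cdots x_{i_k}$, where $(i_1,\dots,i_k)$ lists $S$ in any order extending the reachability partial order of $\Gamma_S$; to an idempotent assign its set $\supp$ of occurring generators. Since every defining relation (i)--(iii) preserves the set of letters of a word, $\supp$ is a well-defined homomorphism from $\HK_\Gamma$ onto the semilattice $(2^{\{1,\dots,n\}},\cup)$, and $\supp(e_S)=S$. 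Hence, once we prove that $e_S$ is a well-defined idempotent and that every idempotent $f$ equals $e_{\supp(f)}$ with $\Gamma_{\supp(f)}$ acyclic, the two assignments are mutually inverse.

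That $e_S$ does not depend on the chosen linear extension is immediate: two linear extensions of a finite poset are joined by transpositions of consecutive incomparable elements, and incomparable vertices of $\Gamma_S$ are joined by no arrow, so their generators commute by (ii). For idempotency I prove the stronger fact $(\dagger)$: $e_Sx_j=e_S=x_je_S$ for every $j\in S$; then $e_S^2=e_S$ follows by absorbing the letters of $e_S$ one at a time. I prove $(\dagger)$ by induction on $|S|$, the cases $|S|\le 1$ being clear. Pick a sink $t$ of $\Gamma_S$ (it exists as $\Gamma_S$ is finite and acyclic); then $t$ is maximal in the reachability order, so $e_S=e_{S'}x_t$ with $S'=S\setminus\{t\}$ and $\Gamma_{S'}$ acyclic. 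If $j=t$, then $e_Sx_t=e_{S'}x_tx_t=e_S$; if $j\in S'$ is joined to $t$ by no arrow, then $e_Sx_j=e_{S'}x_tx_j=e_{S'}x_jx_t=e_S$ by (ii) and induction; and if $j\in S'$ is joined to $t$ by an arrow --- necessarily $j\to t$, since $t$ is a sink --- then, using $e_{S'}x_j=e_{S'}$ from the inductive hypothesis together with relation (iii) for $j\to t$,
\[
e_Sx_j=e_{S'}x_tx_j=(e_{S'}x_j)\,x_tx_j=e_{S'}\,(x_jx_tx_j)=e_{S'}\,(x_jx_t)=(e_{S'}x_j)\,x_t=e_{S'}x_t=e_S.
\]
The identity $x_je_S=e_S$ follows by the mirror argument, using a source of $\Gamma_S$ and relation (iii) read from its other end.

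For the second point I first reduce to the case of full support. For any full subgraph $\Gamma_S$ the assignments $x_i\mapsto x_i$ ($i\in S$), $x_i\mapsto 1$ ($i\notin S$) define a homomorphism $\pi_S\colon\HK_\Gamma\to\HK_{\Gamma_S}$, and $x_i\mapsto x_i$ defines a homomorphism $\iota_S\colon\HK_{\Gamma_S}\to\HK_\Gamma$; both respect (i)--(iii) precisely because $\Gamma_S$ is an induced subgraph, and $\pi_S\iota_S=\mathrm{id}$, so $\iota_S$ is a split embedding. Consequently, for each $S$ the idempotents of $\HK_\Gamma$ with support exactly $S$ correspond bijectively (through $\iota_S$, with inverse $\pi_S$) to the idempotents of $\HK_{\Gamma_S}$ of full support, and the theorem reduces to: \emph{for every finite oriented graph $\Delta$, $\HK_\Delta$ has exactly one idempotent of full support when $\Delta$ is acyclic, and none otherwise.} When $\Delta$ is acyclic, existence is $e_{V(\Delta)}$ from the previous paragraph --- indeed $(\dagger)$ shows $e_{V(\Delta)}$ is the zero of $\HK_\Delta$ --- and uniqueness rests on the fact, already in \cite{maz,type A}, that $\HK_\Delta$ is then finite and $\mathcal J$-trivial; this is the direction that mirrors the classical $0$-Hecke picture (idempotents versus parabolic subsets). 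When $\Delta$ contains an oriented cycle, choose a shortest one: it has length $r\ge 3$, its vertex set $U$ induces exactly the oriented $r$-cycle (a chord would produce a shorter cycle), so $\HK_{\Gamma_U}=C_r$, and $\pi_U$ carries a putative full-support idempotent of $\HK_\Delta$ to a full-support idempotent of $C_r$. It therefore suffices to prove that $C_r$ has no idempotent of full support, and here Proposition~\ref{struktura} is the tool: an idempotent of $C_r$ cannot lie in any of the semigroups of matrix type $M_i$, since $\mathcal{M}^0(S,A,B;P)$ over the infinite cyclic semigroup $S=S_i$ has only $\theta$ as an idempotent --- compare lengths in $(a,s,b)^2=(a,\,s\,P(b,a)\,s,\,b)$ --- whereas every element of full support is, by the structure of the ideal chain, absorbed into $\bigcup_i M_i$; equivalently, this refines the infiniteness of $C_r$ obtained in \cite{aragona,OW}.

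The step I expect to be the real obstacle is the cyclic case above: identifying which elements of $C_r$ have full support, and checking that the finitely many elements of $C_r\setminus\bigcup_i M_i$ listed in Proposition~\ref{struktura} are never among them, requires the fine description of the ideal chain of $C_r$ from \cite{OW}; everything else is either the clean combinatorial induction for $(\dagger)$ or the classically flavoured uniqueness for acyclic graphs. Assembling these pieces, the bijection sends an acyclic full subgraph to the unique idempotent supported on its vertex set, with inverse $f\mapsto\supp(f)$.
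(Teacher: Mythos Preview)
The paper does not supply its own proof of this theorem: it is quoted verbatim from \cite{type A}, with only the one-line remark that in the acyclic case $\HK_\Gamma$ is a quotient of a Kiselman monoid whose idempotents are known from \cite{OnK}. So there is no in-paper argument to compare against; I can only assess your sketch on its own merits.

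Your construction of $e_S$, the well-definedness via adjacent transpositions of incomparable elements, and the absorbing property $(\dagger)$ are clean and correct, as is the split retraction $\pi_S$ reducing the problem to full-support idempotents. Two points remain genuinely open in your write-up. First, for acyclic $\Delta$ you invoke finiteness and $\mathcal{J}$-triviality from \cite{maz,type A} to get uniqueness of the full-support idempotent, but neither property by itself forces a full-support idempotent $g$ into the $\mathcal{J}$-class of the zero $e_{V(\Delta)}$: you would need $g\in\HK_\Delta\, e_{V(\Delta)}\,\HK_\Delta=\{e_{V(\Delta)}\}$, which is exactly the conclusion. What is actually used in \cite{maz,OnK} is a normal-form/rewriting argument showing every idempotent equals some $e_X$; you are effectively re-citing that result rather than deriving it from $\mathcal{J}$-triviality.

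Second, and more seriously, the cyclic step. Your plan is: (a) nonzero elements of $M_i$ are never idempotent (correct, by a length count in $\langle s_i\rangle$), and (b) every full-support element of $C_r$ lies in $\bigcup_i M_i$. Statement (b) is not contained in Proposition~\ref{struktura}: that proposition only says $C_r\setminus M$ is finite, not that it avoids full-support elements. Verifying (b) would require the explicit description of the finitely many elements in $(I_{i-1}/I_i)\setminus M_i$ and in $C_r\setminus I_{-1}$ from \cite{OW}, which is considerably more than what is recorded here; and even then you would be importing a substantial structural theorem to prove a fact that in \cite{type A} is handled by elementary means. A lighter route you might try instead is the faithful action $f\colon C_r\to\mathrm{Map}(\mathbb{Z}^r,\mathbb{Z}^r)$ already used in the paper's Lemma~\ref{nosnik-idempotent}: for a full-support word the affine constant term of $f(w)$ picks up a nonzero contribution from the occurrences of $x_r$, which obstructs $f(w)^2=f(w)$; making this precise would close the gap without appealing to the ideal chain.
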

The bijection from the above theorem follows from the fact that
if $\Gamma$ is an acyclic graph, then $\HK_{\Gamma}$ is a quotient
of a Kiselman's monoid. Idempotents in the latter monoid have been
explicitly described in the paper \cite{OnK}. More precisely, the
following theorem describes idempotents in the Hecke--Kiselman
monoid associated to any acyclic graph.
\begin{theorem}[\cite{maz}, Theorem 1 (iii)]\label{id}
    Assume that $\Gamma$ is a simple oriented graph with the set of
    vertices $\{1, \ldots, n\}$ such that if $i\arr{} j$ in $\Gamma$
    then $i<j$, i.e. $\Gamma$ is acyclic. Then $\HK_{\Gamma}$ has
    exactly $2^n$ idempotents. More precisely every idempotent is of
    the form $e_X$, where for every $X\subseteq\{1,\ldots, n\}$ of the
    form $X=\{i_1, \ldots, i_j \}$ with $i_1<i_2<\cdots<i_j$ we denote
    by $e_X$ the element $x_{i_1}\cdots x_{i_j}$ (for $X=\emptyset$ we
    set $e_X=1$).
\end{theorem}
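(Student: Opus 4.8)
The plan is to reduce the statement to the analogous fact about Kiselman's monoid. Write $K_n$ for Kiselman's monoid, generated by $n$ idempotents $x_1,\dots,x_n$ subject to $x_ix_jx_i=x_jx_ix_j=x_ix_j$ for all $i<j$; by \cite{OnK} the monoid $K_n$ is finite and its idempotents are exactly the $2^n$ elements $e_X=x_{i_1}\cdots x_{i_j}$ indexed by the subsets $X=\{i_1<\dots<i_j\}\s\{1,\dots,n\}$. First I would observe that the identity map on generators extends to a surjective monoid homomorphism $\pi\colon K_n\twoheadrightarrow\HK_{\Gamma}$: each defining relation of $K_n$ holds in $\HK_{\Gamma}$, because for $i<j$ either $i$ and $j$ are joined by an arrow in $\Gamma$, so that relation (iii) applies verbatim, or they are not joined, in which case $x_ix_jx_i=x_i^2x_j=x_ix_j$ and $x_jx_ix_j=x_j^2x_i=x_jx_i=x_ix_j$ by (i)--(ii).

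For the lower bound, the images $\pi(e_X)=x_{i_1}\cdots x_{i_j}$ are idempotents of $\HK_{\Gamma}$ since $\pi$ is a homomorphism, and they are pairwise distinct: the support map $\supp\colon\HK_{\Gamma}\to 2^{\{1,\dots,n\}}$ sending a word to the set of letters occurring in it is well defined, as both sides of each of (i)--(iii) carry the same support, and $\supp(e_X)=X$. Hence $X\mapsto e_X$ is injective and $\HK_{\Gamma}$ has at least $2^n$ idempotents, all of the asserted form.

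For the upper bound --- the key step --- I would use that idempotents lift along surjective homomorphisms of finite semigroups. If $f\in\HK_{\Gamma}$ is idempotent then $T=\pi^{-1}(f)$ is a nonempty subsemigroup of $K_n$ (if $\pi(a)=\pi(b)=f$ then $\pi(ab)=f^2=f$), hence finite, hence contains an idempotent $e$; by \cite{OnK}, $e=e_X$ for some $X$, so $f=\pi(e)=x_{i_1}\cdots x_{i_j}$. Combined with the previous paragraph, the set of idempotents of $\HK_{\Gamma}$ is exactly $\{e_X:X\s\{1,\dots,n\}\}$, which has $2^n$ elements.

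The substantive input is thus external to the reduction: it is the classification of idempotents of $K_n$ from \cite{OnK}, whose proof rests on a combinatorial normal form for the elements of Kiselman's monoid, and I expect this to be the main obstacle. A self-contained proof would have to reproduce that analysis directly for $\HK_{\Gamma}$ --- in particular a direct verification that each $e_X$ is idempotent, by induction on $|X|$, absorbing the least generator by means of the identities $x_ax_bx_a=x_bx_ax_b=x_ax_b$ that hold for all $a<b$ in $\HK_{\Gamma}$ --- after which the distinctness and lifting arguments above go through unchanged.
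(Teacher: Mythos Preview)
Your proposal is correct and follows precisely the approach the paper indicates: the paper does not give its own proof of this theorem but cites it from \cite{maz}, remarking just before the statement that it follows because an acyclic $\HK_{\Gamma}$ is a quotient of Kiselman's monoid, whose idempotents are described in \cite{OnK}. Your argument fleshes out exactly this reduction, with the support map and the finite-semigroup idempotent-lifting step supplying the details the paper leaves implicit.
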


Let $C_n$ be the Hecke--Kiselman monoid associated to the cycle of length $n$, that is
the graph $1\arr{}2\arr{}\cdots\arr{}n\arr{}1$ and denote by $\{x_1,\ldots, x_n\}$ the corresponding
set of generators in $C_n$.\\
We use Theorem \ref{id} to list all idempotents in $C_n$. First
note that every proper full subgraph of the cycle is acyclic.
Consider such a subgraph $\Gamma'$ and denote
$V(\Gamma')=\{i_1, \ldots, i_k\}$ with $i_1<\cdots<i_k$.
Roughly speaking, we need to order the set of vertices
$V(\Gamma')$ in such a way that if $i\arr{} j$ in
$\Gamma'$ then $i<j$. Let us consider two cases.
\begin{enumerate}
    \item[1)] If $\{1, n\}\nsubseteq V(\Gamma')$ then it is clear that if $i\arr{}j$ in $\Gamma'$ then $i<j$.
    \item[2)] If $\{1, n\}\subseteq V(\Gamma')$ then there exist $t+1\leqslant k$ such that $1,\ldots, t\in V(\Gamma')$ but $t+1\notin V(\Gamma')$. Consider a graph $\Theta$ isomorphic to $\Gamma'$ with vertices $\{1,\ldots, k\}$ such that $l\arr{}m$ if and only if $i_{t+l}\arr{}i_{t+m}$ in $\Gamma'$, where for $t+j>k$ we assume that $i_{t+j}=i_{t+j-k}$. Denote by $\{a_1,\ldots, a_k\}$ the set of generators of the Hecke--Kiselman monoid $\HK_{\Theta}$ associated to this graph $\Theta$. Then the homomorphism
    $\HK_{\Theta}\longrightarrow \HK_{\Gamma'}$ defined on the generators
    by $a_j\mapsto x_{i_{t+j}}$ (modulo $k$) for $j=1,\ldots, k$ is an isomorphism.
    Moreover, it is clear that in the graph $\Theta$ if $l\arr{}m$
    then $l<m$. The form of idempotents in $\HK_{\Gamma'}$ in this
    case follows, via Theorem~\ref{id}, from the described isomorphism
    between $\HK_{\Theta}$ and $\HK_{\Gamma'}$.
\end{enumerate}
Therefore, we have proved the following
\begin{corollary}\label{idempform}
    Idempotents in $C_n$ are exactly the elements that can be written
    in the form $e_X$ for some $X\subseteq \{1,\ldots, n\}$ such that
    $X\neq\{1, \ldots, n\}$, where $e_X$ is defined as follows.
    \begin{enumerate}
        \item[1)]If $X=\emptyset$ then $e_X=1$.
        \item[2)] If $\{1, n\}\nsubseteq X\neq \emptyset$, then $e_X=x_{i_1}x_{i_2}\cdots x_{i_j}$ where $X=\{i_1,\ldots, i_j\}$ with $i_1<i_2<\cdots <i_j$.
        \item[3)] If $\{1, n\}\subseteq X$, let $k\in\{1, \ldots, n-1\}$ be such that $\{1, 2,\ldots,k\}\subseteq X$ and $k+1\notin X$. Then $e_X=x_{i_1}\cdots x_{i_s}x_1\cdots x_k$ where $i_s=n$ and $X=\{1, \ldots,k, i_1,\ldots, i_s\}$ with $k+1<i_1<\cdots <i_s=n$.
    \end{enumerate}
\end{corollary}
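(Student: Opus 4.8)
The plan is to reduce everything to Theorems~\ref{subgraphs} and~\ref{id} by choosing, for each relevant subset of vertices, a good labelling. First I would observe that the underlying graph of $C_n$ is the oriented cycle $1\to 2\to\cdots\to n\to 1$, whose only full subgraph containing an oriented cycle is the whole graph itself; hence every \emph{proper} full subgraph is acyclic. By Theorem~\ref{subgraphs}, the idempotents of $C_n$ are then in bijection with the proper subsets $X\subsetneq\{1,\ldots,n\}$, and the task reduces to writing down, for each such $X$, the word representing the corresponding idempotent $e_X$. This already explains why $X=\{1,\ldots,n\}$ is excluded and why $X=\emptyset$ gives $e_X=1$.

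Next I would fix a proper subset $X$ and apply Theorem~\ref{id} to the full subgraph $\Gamma'$ induced on $X$. The only subtlety is that Theorem~\ref{id} requires a vertex labelling in which every arrow is increasing, so I must exhibit such a labelling of $\Gamma'$ and then transport the resulting product of generators back into $C_n$. The case distinction is forced by whether the wrap-around arrow $n\to1$ survives in $\Gamma'$. If $\{1,n\}\nsubseteq X$, then $\Gamma'$ is a disjoint union of directed subpaths of $1\to2\to\cdots\to n$, so the native order $i_1<\cdots<i_j$ on $X$ is already monotone on arrows, and Theorem~\ref{id} yields $e_X=x_{i_1}\cdots x_{i_j}$ directly, which is case~2).

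The substantive case is $\{1,n\}\subseteq X$. Since $X$ is proper and contains $1$, there is a largest $k$ with $\{1,\ldots,k\}\subseteq X$ and $k+1\notin X$, and necessarily $k\leqslant n-2$ because $n\in X$. I would then cut the cycle at this gap and relabel the vertices of $X$ in the cyclic order starting just after $k+1$, that is, $i_1<\cdots<i_s=n$ (the elements of $X$ exceeding $k+1$) followed by $1<\cdots<k$. One checks this is a graph isomorphism of $\Gamma'$ onto an acyclic graph on $\{1,\ldots,|X|\}$ in which every arrow is increasing — including the former arrow $n\to1$, which now runs from the label of $n$ to the label of $1$; this is precisely the isomorphism with the auxiliary graph $\Theta$ described before the statement. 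Plugging this labelling into Theorem~\ref{id} and reading the generators back off in $C_n$ gives $e_X=x_{i_1}\cdots x_{i_s}x_1\cdots x_k$, which is case~3).

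The step I expect to be the main obstacle is exactly the verification in the last paragraph: that cutting at the gap produces a genuine graph isomorphism onto an acyclic graph with monotone arrows, so that Theorem~\ref{id} legitimately applies and the word it outputs, once transported to $C_n$, is a well-defined idempotent independent of the (forced) choices. Once that is settled, assembling the three cases gives the statement; I would additionally check the boundary configurations — for instance $X=\{1,\ldots,k,n\}$ with no intermediate $i_j$, giving $e_X=x_nx_1\cdots x_k$ — to confirm that cases~2) and~3) together cover every proper subset of $\{1,\ldots,n\}$ exactly once.
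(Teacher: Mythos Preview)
Your proposal is correct and follows essentially the same route as the paper: reduce to Theorem~\ref{subgraphs} to get the bijection with proper subsets $X$, then apply Theorem~\ref{id} directly in case $\{1,n\}\nsubseteq X$, and in case $\{1,n\}\subseteq X$ relabel by cutting at the first gap $k+1\notin X$ --- exactly the isomorphism onto the auxiliary graph $\Theta$ that the paper constructs. Your extra remarks (the bound $k\leqslant n-2$, the boundary check $X=\{1,\ldots,k,n\}$) are harmless additional sanity checks.
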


Recall that in $C_n$ there exists an ideal chain
$$\emptyset = I_{n-2}\subseteq I_{n-3}\subseteq\cdots\subseteq I_{0}\subseteq
I_{-1}$$ which is crucial in the description of the structure of
the algebra $K[C_n]$, see Proposition~\ref{struktura}.
\begin{proposition}For every idempotent $e_X$ in $C_n$ such that $|X|\geqslant 2$ we have
    $e_X\in I_{|X|-2}\setminus I_{|X|-1}$. Moreover $x_i\notin I_{-1}$
    for $i\in\{1,\ldots, n\}$.
\end{proposition}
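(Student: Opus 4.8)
The plan is to exploit the ideal-theoretic description in Proposition~\ref{struktura} together with the explicit form of idempotents in Corollary~\ref{idempform}. Recall that $I_i = \{x \in C_n : C_n x C_n \cap \langle x_n q_i\rangle = \emptyset\}$ for $i \neq -1$, so membership of $e_X$ in $I_i$ is governed by whether the two-sided ideal generated by $e_X$ meets the cyclic monoid $\langle x_n q_i\rangle$. Since $e_X$ is an idempotent, $C_n e_X C_n \cap \langle x_n q_i \rangle \neq \emptyset$ is equivalent to some power of $x_n q_i$ factoring through $e_X$, i.e. lying in $C_n e_X C_n$; thus the statement $e_X \in I_{|X|-2}\setminus I_{|X|-1}$ amounts to showing that powers of $x_n q_{|X|-2}$ pass through $e_X$ while powers of $x_n q_{|X|-1}$ do not. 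Here it is natural to use part (7) of Proposition~\ref{struktura}: $|A_i| = |B_i| = \binom{n}{i+1}$, which ties the ``level'' $i$ to subsets of $\{1,\dots,n\}$ of size $i+1$; an idempotent $e_X$ with $|X| = i+2$ should sit exactly at level $i$ of the $M_i$-layer, since the relevant combinatorial data (the rows/columns of the sandwich matrix) are indexed by $(i+1)$-subsets and $e_X$ naturally determines such a subset by dropping one vertex or by some canonical truncation coming from its normal form.

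The concrete computation I would carry out is the following. Using Corollary~\ref{idempform}, write $e_X$ in its canonical form as a word $w$ in the generators, of one of the three listed shapes. In each case I would compute a specific product $u\, e_X\, v$ for suitable $u,v \in C_n$ and show, using only the defining relations (i)--(iii) and the known normal-form calculus for $C_n$ from \cite{OW}, that this product equals a power of $s_{|X|-2} = x_n q_{|X|-2}$ (or at least lies in $\langle s_{|X|-2}\rangle$ after multiplying by further generators), giving $e_X \notin I_{|X|-2}$. Then, for the complementary inclusion $e_X \in I_{|X|-1}$, I would argue that no product $u\, e_X\, v$ can ever reach $\langle s_{|X|-1}\rangle$: the key point is a length/support obstruction — each occurrence of a letter in an element of $\langle s_{|X|-1}\rangle$ must appear with multiplicity patterns forcing at least $|X|+1$ distinct ``active'' generators in any factorization, whereas $e_X$ only involves $|X|$ generators and idempotency prevents creating new ones on the nose. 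This monotonicity ``more letters in $X$ $\Rightarrow$ deeper in the chain'' is exactly the flavour of the ideal chain, so I expect it to follow from the description of $I_i$ rather cleanly once the bookkeeping is set up.

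For the final sentence, $x_i \notin I_{-1}$, recall $I_{-1} = I_0 \cup C_n x_n q_0 C_n$. Since $x_i$ is a generator and $s_0 = x_n q_0$ is a word of length $n$, no product $u x_i v$ can equal a power of $s_0$ unless $u$ or $v$ already supplies essentially all of a copy of $s_0$; but then one checks directly (again via the normal form) that $x_i \notin C_n x_n q_0 C_n$, because in the reduced form every element of that ideal contains $s_0$ as a ``syllable'' in a way incompatible with reducing to a single generator. Likewise $x_i \notin I_0$ since $C_n x_i C_n$ does meet $\langle x_n q_0\rangle$ (indeed $x_n q_0$ itself, being $x_n x_1 \cdots$, lies in $C_n x_i C_n$ for every $i$ occurring in it, and one sees all of $1,\dots,n$ occur). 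Thus $x_i \notin I_{-1}$ and a fortiori $x_i \notin I_j$ for all $j$.

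The main obstacle will be the first half of the middle paragraph: producing, for each of the three shapes of $e_X$ in Corollary~\ref{idempform}, an \emph{explicit} two-sided multiplier exhibiting a power of $s_{|X|-2}$ as a factor through $e_X$, and simultaneously proving the non-factorization of $s_{|X|-1}$ through $e_X$. Both require careful use of the rewriting relations (iii) $x_ix_jx_i = x_jx_ix_j = x_ix_j$ and the combinatorial normal form for $C_n$; the cycle case (3) of Corollary~\ref{idempform}, where $e_X$ wraps around through $x_n$ and back to $x_1\cdots x_k$, is the delicate one and is where I would spend most of the effort, presumably reducing it to the already-established matrix-type structure of $M_{|X|-2}$ and the shape of its sandwich matrix $P_{|X|-2}$.
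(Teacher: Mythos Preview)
Your proposal has the two indices swapped throughout. Recall $I_i=\{x:C_nxC_n\cap\langle x_nq_i\rangle=\emptyset\}$, so $e_X\in I_{|X|-2}$ means that \emph{no} power of $x_nq_{|X|-2}$ factors through $e_X$, while $e_X\notin I_{|X|-1}$ means that some power of $x_nq_{|X|-1}$ \emph{does}. You state the opposite in your second sentence, and then the whole middle paragraph is aimed at proving the wrong pair of assertions: you try to produce $u\,e_X\,v\in\langle s_{|X|-2}\rangle$ (which would give $e_X\notin I_{|X|-2}$, the negation of what is needed) and a non-factorisation for $s_{|X|-1}$ (which would give $e_X\in I_{|X|-1}$, again the negation).

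Even after swapping the indices, your plan for the inclusion $e_X\in I_{|X|-2}$ has a real gap. You propose a ``length/support obstruction'' based on the set of generators occurring in $e_X$, but every $s_i=x_nq_i$ already uses all $n$ generators, and arbitrary left/right multipliers $u,v$ can introduce any generators into $u\,e_X\,v$; so the content of the generators of $e_X$ as a word carries no direct obstruction. The paper's argument is genuinely different here: it uses the faithful action $f:C_n\to\mathrm{Map}(\mathbb{Z}^n,\mathbb{Z}^n)$ from \cite{OW} and the auxiliary ideals $Q_i=\{w:|\supp(f(w))|\leqslant n-i-2\}$, together with the cited inclusion $Q_i\subseteq I_i$. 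A short technical lemma then computes $|\supp(f(e_X))|=n-|X|$ directly from the ordered shape of $e_X$ in Corollary~\ref{idempform}, placing $e_X$ in $Q_{|X|-2}\subseteq I_{|X|-2}$. This $\supp(f(\,\cdot\,))$ invariant is the missing idea in your sketch. For the other half, $e_X\notin I_{|X|-1}$, your corrected idea (produce an explicit $u,v$ with $u\,e_X\,v=(x_nq_{|X|-1})^N$) is indeed what the paper does; the paper simplifies the casework by first applying the rotation automorphism $\sigma:x_i\mapsto x_{i+1}$ (which preserves each $I_i$) to reduce every $e_X$ to the shape $x_{i_1}\cdots x_{i_j}$ with $i_j=n-1$, and then writes down a single right multiplier $w$ putting $e_Xw$ into the indexing set $A_{j-1}$, from which the existence of $u,v$ follows by a result in \cite{OW}. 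Finally, for $x_i\notin I_{-1}$ the paper simply remarks that this is clear from the definition of $I_{-1}$; your longer argument there is unnecessary.
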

\begin{proof}
    To prove the first statement, we will use the map
    $f:C_n\longrightarrow Map(\mathbb{Z}^n, \mathbb{Z}^n)$ considered
    in Section~3 of \cite{OW} given on generators $x_{i}$ of $C_{n}$
    as follows.
    \begin{equation*}
    f(x_i)(m_1, \ldots, m_n)= \begin{cases} (m_1, \ldots m_{i-1},
    m_{i+1}, m_{i+1},\ldots, m_{n})\textrm{ for }i\neq n\\(m_1,
    \ldots, m_{n-1}, m_1+1)\textrm{ for }i=n .\end{cases}
    \end{equation*}
    For every $i=-1, \ldots, n-2$ consider the following ideals
    \begin{equation*}
    Q_i=\{w\in C_n: |\supp (f(w))|\leqslant n-i-2 \}.
    \end{equation*}
    Here we adopt notation from Section~3 of \cite{OW}.
    Then, Lemma 3.7 in \cite{OW} states that for every $i=0,\ldots,
    n-2$ we have $Q_i\subseteq I_i$. Therefore, to show that $e_{X}\in
    I_{|X|-2}$ it is enough to check that $e_{X}\in Q_{|X|-2}$.  This
    follows from the following technical lemma.
    \begin{lemma}\label{nosnik-idempotent}
        Assume that $w=x_{i_1}\ldots x_{i_k}\in C_n$ is such that for
        every $j,l\in\{1,\ldots, k\}$ if $j<l$ then $0<i_l-i_j<n-1$ or
        $i_j-i_l\geqslant 2$. Then  $\supp (f(w))=\{1,\ldots,
        n\}\setminus\{i_1, \ldots, i_{k}\}$.
    \end{lemma}
    \begin{proof}
        We proceed by induction on the length of $w$. If $|w|=1$ then
        there is nothing to prove. Assume now that the assertion holds for
        some $k-1$, where $k\geqslant 2$. Consider the word
        $w=x_{i_1}\ldots x_{i_k}\in C_n$ satisfying the condition from the
        lemma. Then $f(w)(m_1,\ldots, m_n)=f(x_{i_1})(f(x_{i_2}\ldots
        x_{i_k})(m_1,\ldots, m_n))$. Applying the inductive hypothesis to
        the word $w'=x_{i_2}\ldots x_{i_k}$ we obtain that
        $\supp(f(x_{i_2}\ldots x_{i_k}))=\{1,\ldots,
        n\}\setminus\{i_2,\ldots, i_k\}$. Moreover, as $i_1\neq i_l$ for
        $l\geqslant 2$, we know that $f(x_{i_2}\ldots x_{i_k})(m_1,\ldots,
        m_n)$ has $m_{i_1}$ on the $i_1$ coordinate. It is enough to check
        that $m_{i_1}$ does not occur on any other coordinate of
        $f(x_{i_2}\ldots x_{i_k})(m_1,\ldots, m_n)$. Indeed, every $f(x_j)$ changes only $j$ coordinate.
        Therefore,  $f(w')(m_1,\ldots, m_n)$ could have $m_{i_1}$ on the coordinate different than $i_1$,
        only if $m_{i_1}$ had been rewritten firstly on the $i_1-1$ coordinate in the case $i_1\neq 1$
        or on the $n$ coordinate if $i_1=1$. Thus we get that either $x_{i_1-1}\in\{
            x_{i_2},\ldots,x_{i_k}\}$ or $i_1=1$ and $x_{n}\in\{
            x_{i_2},\ldots,x_{i_k}\}$, which is not true as
            $x_{i_l}-x_{i_1}\in\{ -n+1,\ldots, -2\}\cup\{1,\ldots, n-2\}$ for
            every $l>1$. It follows that $\supp(f(w))=\supp (f(w'))\setminus
            \{i_1\}=\{1,\ldots, n\}\setminus\{i_1,\ldots, i_k\}$.
    \end{proof}

    As every idempotent $e_X$ described in Corollary~\ref{idempform}
    satisfies conditions of Lemma~\ref{nosnik-idempotent}, it follows
    that $|\supp (f(e_X))|=n-|X|$,
    and thus $e_X\in I_{|X|-2}$.\\
    Let us now check that $e_{X}\notin I_{|X|-1}$. We have an
    automorphism $\sigma$ of $C_n$ such that $\sigma(x_i)=x_{i+1}$ for
    every $i=1,\ldots, n$ (where we agree that $x_{n+1}=x_1$). By
    Lemma 3.17 in \cite{OW},  it has the property that
    $\sigma(I_i)=I_i$ for $i\in\{0,\ldots, n-3\}$. Applying the
    automorphism $\sigma$ a few times if necessary, we may assume that
    the  idempotent $e_X$ is of the form $x_{i_1}\cdots x_{i_j}$ with
    $i_1<\cdots<i_j$ and $i_j=n-1$. Indeed, if $e_X=x_{i_1}\ldots
    x_{i_j}$ is such that $i_1<\cdots<i_j$ and $(i_1, i_j)\neq (1,
    n)$, then $\sigma^{n-1-i_j}(e_{X})=x_{n-(i_j+1)+i_1}\cdots
    x_{n-(i_j+1)+i_j}$ is of the required form as $1\leqslant
    n-(i_j+1)+i_k<n$ for $k=1,\ldots, j$. Moreover, if
    $e_{X}=x_{i_1}\cdots x_{i_s}x_n x_1\cdots x_k$ with
    $k+1<i_1<\cdots <i_s$ then $\sigma^{n-1-k}(e_X)=x_{i_1-k-1}\cdots
    x_{i_s-k-1}x_{n-k-1}\cdots x_{n-1}$, as
    $n+1\leqslant i_j+n-k-1<2n$ and thus $\sigma^{n-1-k}(e_X)$ is of the required form in this case, as well. \\
    So, let $e_X$ be of the form $x_{i_1}\cdots x_{i_j}$ with
    $i_1<\cdots<i_j$ and $i_j=n-1$.
    Then consider the element
    $$w=(x_{i_{1}-1}x_{i_{1}-2}\ldots x_{1})(x_{i_{2}-1}x_{i_{2}-2}\ldots x_{2})\cdots(x_{i_{j}-1}x_{i_{j}-2}\ldots x_{j}).$$
    Note that if $i_k=k$ for some $1\leqslant k\leqslant j$ then $i_1=1, \ldots, i_k=k$.
    If $i_k=k$ then by $(x_{i_{k}-1}x_{i_{k}-2}\ldots x_{k})$ we understand trivial word $1$.\\
    Since $x_{i_{m+1}}\cdots x_{i_{j}}$ commutes with
    $x_{i_{m}-1}\cdots x_{m}$ for $m=1,\ldots, j-1$, the element
    $e_{X}w$ can be written in $C_n$ in the form
    $$(x_{i_1}x_{i_1-1}\cdots x_{1})(x_{i_{2}}x_{i_{2}-1}\cdots x_{2})(x_{i_{3}}x_{i_{3}-1}\cdots x_{3})\cdots(x_{i_{j}}x_{i_{j}-1}\cdots x_{j}).$$
    From the description of the sets $A_i$ in Theorem 2.1 in \cite{OW}
    it follows that $e_Xw\in A_{j-1}$. Theorem 3.3 gives that there
    exist $u,v\in C_n$ such that $ue_{X}v=(x_nq_{j-1})^N$ for some
    $N\geqslant 1$. By the definition of the ideal $I_{j-1}$ this
    implies that $e_X\notin I_{j-1}=I_{|X|-1}$. The assertion follows.\\
    The second part of the proposition is clear from the definition
    of $I_{-1}$.

\end{proof}

\section{Prime ideals in $K[C_n]$}\label{primespectrum}

We are ready to describe the prime spectrum of the algebra
$K[C_n]$, for any $n\geqslant 3$.

For every $i=0,\ldots, n-2$, let us choose an ideal $J_i$ that is
maximal among all ideals in $K[C_n]$ such that $I_i\subseteq J_i$
and $J_i\cap K[x_nq_i]=0$.

\begin{theorem}\label{chain} Every maximal chain of prime ideals in the
    Hecke--Kiselman algebra $K[C_n]$ is of the form $J_i\subsetneq P$
    for $i=0,\ldots, n-2$, where $J_i$ are ideals described above
    and $P$ are maximal ideals in $K[C_n]$. Moreover, for every $i$ the ideal $J_i$ is uniquely
defined. Left (right) primitive ideals in $K[C_n]$ are precisely
the ideals~$P$.
\end{theorem}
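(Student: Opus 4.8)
The theorem asserts three things: (1) every maximal chain of primes has length two and is of the form $J_i \subsetneq P$; (2) each $J_i$ is uniquely determined (independent of the choice made in its definition); (3) the $P$'s coincide with the left/right primitive ideals. The strategy is to exploit the ideal chain of Proposition~\ref{struktura} together with the fact that $K[C_n]$ is PI (indeed $C_n$ satisfies a polynomial identity, since by Proposition~\ref{struktura} it is built from a finite semigroup and pieces of matrix type over the commutative monoid $\langle x_nq_i\rangle$). Under the PI hypothesis, primitivity equals primality-plus-a-height condition, and the classical theory of prime ideals in monoid algebras of semigroups of matrix type over cyclic semigroups is available.

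First I would analyze the prime spectrum via the chain $\emptyset = I_{n-2} \vartriangleleft \cdots \vartriangleleft I_0 \vartriangleleft I_{-1} \vartriangleleft C_n$. A prime ideal $Q$ of $K[C_n]$ either contains $I_{-1}$ — in which case it corresponds to a prime of the finite-dimensional algebra $K[C_n]/I_{-1}$, hence is maximal and such primes form a ``layer'' of height at most one above the minimal ones coming from below — or it does not contain some $I_i$. For the latter, I would use the standard fact that a prime ideal $Q$ of $K[C_n]$ not containing $I_{i-1}$ but containing $I_i$ induces, on the Rees factor $I_{i-1}/I_i$, and hence on the semigroup of matrix type $M_i = \mathcal{M}^0(S_i, A_i, B_i; P_i)$ (whose elements fill $I_{i-1}/I_i$ up to a finite set, by items (2),(3) of Proposition~\ref{struktura}), a prime ideal of $K_0[M_i]$. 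The key computational input is that $K_0[M_i]$, being the contracted semigroup algebra of a matrix-type semigroup over the infinite cyclic semigroup $S_i = \langle x_nq_i\rangle$ with a symmetric sandwich matrix $P_i$ having entries in $\langle x_nq_i\rangle \cup \{\theta\}$, has a prime spectrum that is completely understood: up to nilpotents it is (generically) a full matrix ring over $K[S_i] \cong K[t]$ localized suitably, so its nonzero primes are $0$ and the height-one primes corresponding to the maximal ideals $(t - \lambda)$, $\lambda \in K^*$ (one must check the sandwich matrix is invertible over the relevant localization so that $K_0[M_i]$ is, modulo its radical, Morita equivalent to $K[t,t^{-1}]$ or $K[t]$). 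This gives, over $I_{i-1}/I_i$, exactly one minimal prime (pulling back $0$) and the maximal ones above it; pulling these back to $K[C_n]$ and adding the requirement $Q \cap K[x_nq_i] = 0$ pins down $J_i$ as \emph{the} minimal prime over $I_i$ not meeting $K[x_nq_i]$, which establishes uniqueness in (2). The maximal ideals $P$ above $J_i$ then correspond to the $(t-\lambda)$'s, and since these quotients $K[C_n]/P$ are finite-dimensional simple algebras (matrix rings over $K$, as $K$ is algebraically closed), the chain $J_i \subsetneq P$ has length exactly two and is maximal — no prime can sit strictly between, and nothing lies strictly below $J_i$ except possibly primes from deeper in the chain, which one rules out by the containment $I_i \subseteq J_i$ and the fact that $J_i \cap K[x_nq_i] = 0$ forces $J_i$ to be minimal over $I_i$ in the relevant sense.

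For the primitivity statement (3) I would argue as follows. Since $K[C_n]$ is PI and (by the structure above) is a finitely generated algebra that is a finite module over a central affine subalgebra — or at least satisfies the Nullstellensatz — primitive ideals are exactly the locally closed points of the prime spectrum, equivalently the maximal ideals (Kaplansky's theorem: in a PI ring a primitive ideal is maximal). Conversely every maximal ideal $P$ of the affine PI algebra $K[C_n]$ has $K[C_n]/P$ finite-dimensional simple over $K$, hence $P$ is primitive. So one must only check the maximal ideals of $K[C_n]$ are precisely the $P$'s appearing as tops of these chains, together with those pulled back from the finite quotient $K[C_n]/I_{-1}$ — and the point is that the latter also arise as $J_{n-2} = \{0\}$... no: more carefully, the finite-dimensional quotient contributes maximal ideals $P \supseteq I_{-1}$, and one checks these too are of the form claimed because $I_{-1} \supseteq I_0 \supseteq \cdots$, so $P \supseteq I_0$ and $P \cap K[x_nq_0]$ is either $0$ (giving $P \supseteq J_0$, chain $J_0 \subsetneq P$) or all of $K[x_nq_0]$, which cannot happen since $x_nq_0 \notin I_{-1}$... this needs the observation that $x_nq_i$ is not nilpotent-modulo-$I_{-1}$, handled via Proposition~\ref{struktura}. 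Assembling: every maximal ideal $P$ lies over exactly one $J_i$, every chain extends to such a two-step chain, and primitive $=$ maximal $=$ these $P$.

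\textbf{Main obstacle.} The technically delicate step is the precise description of the prime spectrum of the contracted algebra $K_0[M_i]$ of the matrix-type semigroup and the verification that its sandwich matrix $P_i$ is non-degenerate over the appropriate localization of $K[x_nq_i] \cong K[t]$ — only this non-degeneracy guarantees that $K_0[M_i]$ is, up to a nilpotent ideal, a matrix ring over $K[t]$ (or over $K[t, t^{-1}]$ after inverting $t$), from which the height-one structure and the uniqueness of the minimal prime follow. Establishing that the $J_i$ are pairwise incomparable and that no prime lies strictly between $J_i$ and its $P$'s reduces to a Krull-dimension count on $K[t]$, which is routine once the Morita picture is in place; likewise reconciling the primes coming from the finite quotient $K[C_n]/I_{-1}$ with the list $\{J_0, \dots, J_{n-2}\}$ needs a careful but elementary bookkeeping using items (3)--(5) of Proposition~\ref{struktura}.
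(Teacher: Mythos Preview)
Your approach differs substantially from the paper's, and as written it has a genuine gap.

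\textbf{What the paper does.} The paper's proof is short and global: it uses that $K[C_n]$ is an affine noetherian PI algebra of Gelfand--Kirillov dimension $1$, together with the equality $\mathrm{clKdim} = \GK$ for such algebras. Since $K[x_nq_i]$ embeds in $K[C_n]/J_i$, one gets $\GK(K[C_n]/J_i)=1$, hence each $J_i$ is a minimal prime and not maximal (Kaplansky). The crucial step for uniqueness and for exhausting the minimal primes is to prove that $J_0\cap\cdots\cap J_{n-2}=0$: the paper shows $(\bigcap J_i)\cap K[M]=0$ using the primeness of each $K_0[M_i]$ (so $K[M_i]\cap J_i=0$), and then concludes via the finiteness of $C_n\setminus M$. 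Once $\bigcap J_i=0$, every prime contains some $J_i$, the $J_i$ are exactly the minimal primes (so in particular are uniquely determined), and since $\mathrm{clKdim}(K[C_n])=1$ every chain has the form $J_i\subsetneq P$ with $P$ maximal. Primitive $=$ maximal then follows from PI.

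\textbf{Where your plan falls short.} Your layer-by-layer analysis via the $I_i$-chain and the prime spectrum of each $K_0[M_i]$ never establishes that the $J_i$ exhaust the minimal primes of $K[C_n]$. You argue that a prime $Q$ with $I_i\subseteq Q$ and $I_{i-1}\not\subseteq Q$ induces a prime of $K_0[M_i]$, but this does not yet show that the pullback of the zero ideal of $K_0[M_i]$ coincides with $J_i$, nor that there are no other minimal primes hiding in the finite pieces $(I_{i-1}/I_i)\setminus M_i$ or in the finite top quotient $C_n/I_{-1}$. The paper's ``intersection is zero'' argument is precisely what resolves this, and nothing in your outline replaces it. Your treatment of the $I_{-1}$ layer also contains a factual slip: by definition $I_{-1}=I_0\cup C_nx_nq_0C_n$, so $x_nq_0\in I_{-1}$, contrary to what you write; hence your proposed mechanism for fitting the maximal ideals above $I_{-1}$ into the $J_i$-picture does not work as stated. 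Finally, the Morita/sandwich-matrix route you sketch for the height computation on $K_0[M_i]$ is a detour: the paper gets the length-two chains in one stroke from $\GK(K[C_n])=1$, without needing to analyse $\mathrm{Spec}\,K_0[M_i]$ at all.
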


\begin{proof}  We first show that ideals $J_i$ are the
only minimal prime ideals in $K[C_n]$.  From the definition it is
easy to check that every $J_i$ is prime. Moreover, as $J_i\cap
K[x_nq_i]=0$, the algebra $K[x_nq_i]=K[x]$ embeds into
$K[C_n]\big/ J_i$, and therefore $\GK K[C_n]\big/ J_i\geqslant 1$.
It was proved in \cite{mecel_okninski1} that $\GK K[C_n]=1$ and
thus $\GK K[C_n]\big/ J_i = 1$. We know also that $K[C_n]$ is a
noetherian PI algebra, see \cite{mecel_okninski1} and \cite{OW}.
From Corollary 10.16 in \cite{krause} it follows that $1=\GK
K[C_n]=\GK K[C_n]\big/ J_i=\textrm{clK}\dim K[C_n]\big/ J_i$. Thus
$J_i$ are minimal prime ideals in $K[C_n]$. Moreover, the intersection $J=J_0\cap J_1\cap\ldots\cap J_{n-2}$ is an ideal in
$K[C_n]$ and we claim that $J\cap K[M]=0$. 
First, recall that the contracted algebra $K_{0}[M_i]$ is a prime algebra for any $i=0,\ldots, n-2$, see Theorem~5.8 in \cite{OW}. Since $(x_nq_i)K_{0}[M_i](x_nq_i)\subseteq K[x_nq_i]$, then also $x_nq_i(K_{0}[M_i]\cap J_i)x_nq_i\subseteq K[x_nq_i]\cap J_i$ and it follows that $ K[x_nq_i]\cap J_i=0$ implies that $ K[M_i]\cap J_i=0$. Suppose that
$\alpha\in J\cap K[M]$ is a non-zero element. Then we can write
$\alpha=\alpha_0+\ldots+ \alpha_{n-2}$, where $\alpha_k\in K[M_k]$
for every $k$. Take minimal $i$ such that $\alpha_i\neq 0$. Then
$i=n-2$ would imply $\alpha=\alpha_{n-2}\in K[M_{n-2}]\cap J=0$,
which leads to a contradiction. Therefore $i<n-2$. Consider $0\neq
\alpha_i=\alpha-(\alpha_{i+1}+\ldots+\alpha_{n-2})$. On the one
hand, we have that $\alpha_i\in K[M_i]$. On the other hand
$\alpha_{i+1}+\ldots+\alpha_{n-2}\in K[M_{i+1}]+\ldots+
K[M_{n-2}]\subseteq K[I_i]\subseteq J_i$ and thus
$\alpha-(\alpha_{i+1}+\ldots+\alpha_{n-2})\in J_i$. Therefore
$0\neq\alpha_i\in K[M_i]\cap J_i=0$. The obtained contradiction
shows that $J\cap K[M]=0$, as claimed. Since $C_n\setminus M$ is
finite, $J$ is finite dimensional and thus $J=0$ follows from
Lemma 3.3 in \cite{OW}. This implies that $J_i$ are the only
minimal prime ideals in $K[C_n]$ and for every prime ideal $Q$ in
$K[C_n]$ we have $J_i\subseteq Q$ for some $i\in\{0,\ldots,
n-2\}$. So, actually, for every $i$ the ideal $J_i$ is uniquely
defined.

Moreover, $\textrm{clK}\dim K[C_n]\big/ J_i=\GK K[C_n]\big/
J_i=1$, so $J_i$ is not a maximal ideal of $K[C_n]$ (as a simple
PI-algebra is finite dimensional, by the theorem of Kaplansky; see
Theorem 1.5.16 in \cite{Rowen}). Hence, for every $i\in\{0,\ldots,
n-2\}$ there exists a prime ideal $P$ such that $J_i\subsetneq P$.
Then $\textrm{clK}\dim K[C_n]\big/ P=0$ and $\textrm{clK}\dim
K[C_n]\big/ P=\GK K[C_n]\big/ P$, so $K[C_n]\big/ P$ is a finite
dimensional prime algebra. It follows that $P$ is a maximal ideal
in $K[C_n]$. Clearly for every $i$ there may be many maximal
ideals $P$ containing $J_i$.

 In particular (left) right primitive ideals of $K[C_n]$ are
exactly such ideals $P$.
\end{proof}

\section{Irreducible representations of $K[C_n]$}\label{repC_n}
Our aim is to investigate the irreducible representations of the
Hecke--Kiselman algebra $K[C_n]$. Let us remind that the field $K$ is algebraically closed.

Our approach is based on Proposition~\ref{struktura}. The general
idea is motivated by the representation theory of finite
semigroups. Namely, if the semigroup $S$ is finite, then the
irreducible representations of $S$ can be obtained in terms of
irreducible representations of maximal subgroups of $S$, see
Chapter 5 in \cite{clifford}, or Chapter 5 in \cite{steinberg} for
another approach. The latter representations are related to
$0$-simple principal factors of the semigroup $S$. Moreover, it is
known that the Hecke--Kisleman monoid $C_n$ is
$\mathcal{J}$--trivial, see Theorem~4.5.3 in \cite{denton2}.
Representation theory of finite $\mathcal{J}$--trivial monoids
(more generally finite $\mathcal{R}$--trivial monoids) can be
easily described, see Corollary~5.7 in \cite{steinberg} or
\cite{denton}. In particular, in the $\mathcal{J}$--trivial case
representations can be parametrized by idempotents. We construct
two types of representations of $K[C_n]$: those coming from the
representations of $K[M_i]$, and those related to the idempotents
in $C_n$. It is known that sandwich matrices $P_i$ are invertible
as matrices in $M_{n_i}(K(s_i))$, see \cite{semiprime}, and thus $K_0[M_i]$ are almost
Munn algebras. Simple modules over such algebras can be
described, see Chapter 5 in \cite{semalg}. In Section~\ref{repM} we will extend this result to our setting.

First, we are going to prove that every maximal ideal $J$ in
$K_0[M_i]$ extends to maximal ideal in the algebra $K[C_n]$.

Abusing slightly the notation, $K_{0}[M_i]$ will be sometimes
identified with the subspace $K[M_i]$ of the algebra $K[C_n]$.
\begin{proposition}\label{simple Mi}
Assume that $I$ is a maximal ideal in $K_0[M_i]$. Then there
exists a unique maximal ideal $\tilde{I}$ in $K[C_n]$ such
that $\tilde{I}\cap K_{0}[M_i] =I$ and $K_0[M_i]/I\simeq
K[C_n]/\tilde{I}$.
\end{proposition}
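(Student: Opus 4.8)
The plan is to build the ideal $\tilde I$ from $I$ by transporting it along the "sandwich" structure of $M_i$. Recall from Proposition~\ref{struktura} that $M_i=\mathcal M^0(S_i,A_i,B_i;P_i)$ with $S_i=\langle x_nq_i\rangle$ a free cyclic semigroup, and that $P_i$ is invertible over $M_{n_i}(K(s_i))$, so $K_0[M_i]$ is an order in a full matrix algebra $M_{n_i}(K(s_i))$ (an "almost Munn algebra"). A maximal ideal $I$ of $K_0[M_i]$ therefore corresponds, by the theory of such algebras (Chapter~5 of \cite{semalg}), to a maximal ideal of $M_{n_i}(K(s_i))$, i.e. to evaluating $s_i$ at a nonzero scalar $\lambda\in K^\times$ ($K$ algebraically closed), and the quotient $K_0[M_i]/I$ is a full matrix algebra $M_{n_i}(K)$ — in particular finite-dimensional and simple. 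First I would make this identification precise and record that $K_0[M_i]/I\simeq M_{n_i}(K)$.

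Next I would produce the two-sided ideal $\tilde I$ of $K[C_n]$. The natural candidate is the largest ideal of $K[C_n]$ whose intersection with $K[M_i]$ lies in $I$; equivalently, set $\tilde I=\{\,a\in K[C_n]: C_n a C_n\cap K[M_i]\s I \text{ after reduction mod }I_i\,\}$, or more cleanly work modulo $I_i$, where by Proposition~\ref{struktura}(6) $M_i$ is a two-sided ideal of $C_n/I_i$. Over $K[C_n/I_i]$ the subalgebra $K_0[M_i]$ is then an ideal, and standard ring theory gives a well-defined ideal $\tilde I/K[I_i]$ of $K[C_n/I_i]$ that is maximal with respect to meeting $K_0[M_i]$ in $I$; one checks $\tilde I\cap K_0[M_i]=I$ exactly because $I$ is an ideal of $K_0[M_i]$ and $K_0[M_i]$ is an ideal of the ambient algebra (so $I\cdot K_0[M_i]\cdot K_0[M_i]\s I$ forces nothing new to fall in). The quotient map $K[C_n]\to K[C_n]/\tilde I$ restricts on $K_0[M_i]$ to the surjection $K_0[M_i]\to K_0[M_i]/I\simeq M_{n_i}(K)$, and since $K_0[M_i]$ is an ideal of $K[C_n/I_i]$ with simple, hence idempotent-generated, image, the image of $K_0[M_i]$ in $K[C_n]/\tilde I$ is a nonzero ideal isomorphic to a full matrix algebra; a simple PI-algebra has no proper ideals, so this image is all of $K[C_n]/\tilde I$. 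That yields simultaneously that $\tilde I$ is maximal and $K_0[M_i]/I\simeq K[C_n]/\tilde I$.

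For uniqueness, suppose $\tilde I'$ is another maximal ideal of $K[C_n]$ with $\tilde I'\cap K_0[M_i]=I$. Since $I\subsetneq K_0[M_i]$, we have $K_0[M_i]\nsubseteq \tilde I'$, so the image of $K_0[M_i]$ in the simple algebra $K[C_n]/\tilde I'$ is a nonzero ideal, hence equals $K[C_n]/\tilde I'$; thus $K[C_n]/\tilde I'$ is a quotient of $K_0[M_i]/I$, and by simplicity of both, $K[C_n]/\tilde I'\simeq K_0[M_i]/I$ with the isomorphism compatible with the maps from $K_0[M_i]$. Then $\tilde I'$ is the preimage of $0$ under $K[C_n]\to K[C_n]/\tilde I'$, which is determined by the composite $K[C_n]\to K[C_n/I_i]$ followed by the unique extension of $K_0[M_i]\to K_0[M_i]/I$ to an algebra map out of $K[C_n/I_i]$ killing everything outside — but any such extension must agree with the one defining $\tilde I$ because $K_0[M_i]$ is an ideal and $x\cdot K_0[M_i]$ determines the action of each $x\in C_n/I_i$ on the simple module $K_0[M_i]/I$ up to the already-fixed data. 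Hence $\tilde I'=\tilde I$.

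The main obstacle I expect is the uniqueness argument, specifically showing that the $K[C_n]$-bimodule action on the simple quotient $K_0[M_i]/I$ is rigidly determined by $I$ alone: one must rule out "twisting" the extension of the representation from the ideal $K_0[M_i]$ to all of $K[C_n]$. This is handled by the fact that $K_0[M_i]$, being an order in $M_{n_i}(K(s_i))$ and an ideal of $K[C_n/I_i]$, contains elements that act as the identity on the simple module $K_0[M_i]/I$ (the image of a suitable idempotent from the matrix algebra, lifted using invertibility of $P_i$), so the action of any $a\in K[C_n]$ on $K_0[M_i]/I$ equals the action of $a\cdot e$ with $e\in K_0[M_i]$ — reducing everything to data already inside $K_0[M_i]/I$. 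Making this "approximate identity" claim precise, via the invertibility of the sandwich matrix $P_i$ and the description of $A_i,B_i$ from \cite{OW}, is the technical heart of the proof.
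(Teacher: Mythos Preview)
Your overall strategy matches the paper's: pass to $K[C_n/I_i]$, use that $K_0[M_i]$ is an ideal there, push $I$ up to an ideal of the ambient algebra, and then use that $K_0[M_i]/I$ is a finite-dimensional simple algebra (hence unital) sitting as an ideal in the quotient, so it splits off as a direct factor via a central idempotent. The paper does exactly this: it sets $\bar I=K[I_i]+I$, shows it is already a two-sided ideal of $K[C_n]$, and then takes $\tilde I=\ker\big(K[C_n]\to (K[C_n]/\bar I)\cdot f\big)$ for the central idempotent $f$ arising from the identity of $K_0[M_i]/I$.

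Where you diverge from the paper is in locating the difficulty. You flag uniqueness as ``the main obstacle'' and spend a paragraph on an approximate-identity argument; in the paper uniqueness is a one-line consequence of the fact that $\bar I$ is \emph{the} ideal of $K[C_n]$ generated by $I$, so any competitor $\tilde I'$ contains $\bar I$ and is then forced to equal the $(1-f)$-summand. The step the paper actually works for is the one you wave through: showing that the ideal of $K[C_n]$ generated by $I$ meets $K_0[M_i]$ only in $I$ (equivalently, $RIR=I$ in $R=K[C_n/I_i]$). Your justification ``$I\cdot K_0[M_i]\cdot K_0[M_i]\subseteq I$ forces nothing new to fall in'' is not an argument; one needs the computation
\[
(R(I{+}K[I_i])R)^3\subseteq K[M_i](I{+}K[I_i])K[M_i]+K[I_i]\subseteq I+K[I_i],
\]
together with simplicity of $K_0[M_i]/I$, to rule out $RIR=K_0[M_i]$. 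Relatedly, the sentence ``a simple PI-algebra has no proper ideals, so this image is all of $K[C_n]/\tilde I$'' is not the right logic: what forces the image to be everything is the maximality of $\tilde I$ (or, in the paper's language, projecting onto the $f$-summand), not simplicity of the image alone.

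Two smaller points. First, $K_0[M_i]/I$ need not be $M_{n_i}(K)$: as Proposition~\ref{M_i1} shows, evaluating $s_i$ at $\lambda$ gives $M_r(K)$ with $r=\mathrm{rk}(\overline{P_i})$, which can be strictly less than $|A_i|$. Your argument only needs ``finite-dimensional simple'', so this is harmless, but the stated identification is incorrect. Second, maximal ideals of $K_0[M_i]$ do not correspond to maximal ideals of $M_{n_i}(K(s_i))$ (the latter is a simple ring with no proper nonzero ideals); they correspond to evaluations $s_i\mapsto\lambda\in K^\times$, i.e.\ to maximal ideals of $K[s_i,s_i^{-1}]$.
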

\begin{proof}
We treat the semigroup $M_i$ as a subsemigroup in $I_{i-1}/I_i$.
Then we know that $M_i$ is a two-sided ideal in $C_n/I_{i}$,
see Proposition~\ref{struktura}. Let $I$ be the ideal in
$K_0[M_i]$ such that the algebra $K_0[M_i]/I$ is simple. Then, as
$K_0[M_i]$ is an ideal in $K_0[C_n/I_{i}]$, we have that
$I\subseteq K[C_n] I K[C_n]\subseteq K[M_i]$ modulo $K[I_i]$. We
assume that $K_0[M_i]/I$ is simple and therefore (modulo $K[I_i]$)
either $I= K[C_n] I K[C_n]$ or $K[C_n] I K[C_n]= K[M_i]$. On the
other hand
\begin{eqnarray*} \lefteqn{ (K[C_n](I+K[I_i])K[C_n])^3  \subseteq }\\
& &(K[C_n] (I+K[I_i])
K[C_n])(I+K[I_i])(K[C_n] (I+K[I_i]) K[C_n])\subseteq \\
& &K[M_i](I+K[I_i])K[M_i] + K[I_{i}]\subseteq I+K[I_i] .
\end{eqnarray*}
It follows that $I= K[C_n] I K[C_n]$ modulo $K[I_i]$. Therefore
$\overline{I}=K[I_i]+I$ is an ideal in $K[C_n]$. Moreover from
$\overline{I}\cap K_0[M_i]=I$ (as $M_i\subseteq I_{i-1}\setminus
I_i$) we get that $K_0[M_i]/I\triangleleft K[C_n]/\overline{I}$.
$K_0[M_i]/I$ is a simple PI-algebra and hence it is finite
dimensional by Kaplansky theorem.

 Thus $K_0[M_i]/I$ is an algebra with an identity and
an ideal of $K[C_n]/\overline{I}$, so that
$K_0[M_i]/I=(K[C_n]/\overline{I})\cdot f$ for a central
idempotent~$f$. If we consider the natural epimorphism $\varphi:
K[C_n]\twoheadrightarrow (K[C_n]/\overline{I})\cdot f$ then
$\tilde{I}=\ker\varphi$ is an ideal in $K[C_n]$ such that
$K[C_n]/\tilde{I}\simeq K_0[M_i]/I$. The uniqueness of the
ideal $\tilde{I}$ is a direct consequence of the construction
because $\bar{I}$ is the ideal of $K[C_n]$ generated by $I$.
\end{proof}

Note that the maximal ideal in $K[C_n]$ described in
Proposition~\ref{simple Mi} is one of the ideals $P$ associated to
the minimal prime ideal $J_i$ in Theorem \ref{chain} and we have
$K_0[M_i]/I\simeq K[C_n]/P\simeq M_j(K)$ for some $j\geqslant 1$
(we assume that $K$ is algebraically closed). Then $P$ is the
annihilator of a simple module and the corresponding simple module
is $K^j$.

Now, consider any maximal ideal $P\triangleleft K[C_n]$. As we have
explained earlier, $J_i\subseteq P$ for some $i\in\{0,\ldots,
n-2\}$ and thus also $I_j\subseteq P$ for some $j\in\{0,\ldots,
n-2\}$. Take the minimal $i\geqslant 0$ such that $I_i\subseteq
P$. Then either $K[M_i]\nsubseteq P$ or $K[M_i]\subseteq P$. In
the first case $P\cap K[M_i]\neq 0$, as otherwise $P=J_i$, which
contradicts Theorem \ref{chain}. Then $0\neq P\cap K_0[M_i]$ is
ideal in $K_0[M_i]$ such that $K_0[M_i]\big/(P\cap
K_0[M_i])\triangleleft K[C_n]/P\simeq M_j(K)$ for some $j\geqslant
1$. Therefore we get that $K_0[M_i]\big/(P\cap K_0[M_i])=K[C_n]/P$. In
particular, in this case the maximal ideal $P$ comes from a maximal
ideal in $K_0[M_i]$, in the way described in Proposition~\ref{simple
Mi}.

Now, let us consider the second case, namely $K[M_i]\subseteq P$.

If $i=0$ and $I_{-1}\subseteq P$, then $C_n\big/ I_{-1}$ is a
finite monoid and from Section~\ref{idempotents} it follows that there are
exactly $n+1$ idempotents $e=e^2\in C_n\setminus I_{-1}$ (generators $x_1, \ldots, x_n$ and $1$).
As $C_n$ is a $\mathcal{J}$-trivial monoid, also $C_n/I_{-1}$ is $\mathcal{J}$-trivial.

By Corollary 5.7 in \cite{steinberg}, all irreducible
representations of the monoid $C_n/I_{-1}$ can be parametrized by
idempotents $e\in C_n\setminus I_{-1}$. More precisely, let
$e^2=e$ be such an idempotent and consider $\overline{I}(e)=\{m\in
C_n/I_{-1}: e\notin (C_n/I_{-1}) m (C_n/I_{-1})\}$, that is the
ideal of non-generators of the principal ideal $(C_n/I_{-1}) e
(C_n/I_{-1})$. Then there exists a one-dimensional simple
$K[C_n/I_{-1}]$-module $S_e$ such that the corresponding
representation $\varphi_e: K[C_n/I_{-1}]\longrightarrow K$ is
given for $m\in C_n/I_{-1}$ by
$$\varphi_e(m)=\begin{cases}  0 \quad \textrm{ if }m\in \overline{I}(e)
\\ 1 \quad \textrm{ otherwise }\end{cases}.$$
This induces the irreducible representation of $K[C_n]$ given by
$K[C_n]\twoheadrightarrow K[C_n/I_{-1}]\arr{\varphi_e}{K}$ and
thus leads to a maximal ideal in $K[C_n]$ associated to $e$.
What is more, for different idempotents $e=e^2\in C_n\setminus
I_{-1}$ we get different maximal ideals and in this case all
possible maximal ideals are of the above form.

Assume now that $K[M_i]\subseteq P$ and $i>0$, that is
$I_{i-1}\nsubseteq P$. Consider the finite semigroup
$I_{i-1}\big/(I_i\cup M_i)$. In this case every simple $K[C_n]$-module with annihilator $P$ is also a
simple $K[I_{i-1}\big/(I_i\cup M_i)]$-module. The latter modules can be described,
see Chapter~5 in \cite{steinberg}. Indeed, from the characterization of idempotents
in Section~\ref{idempotents} it is clear that there exists an idempotent $e\in I_{i-1}\big/(I_i\cup M_i)$
and among
all such idempotents we can choose a minimal one with respect to the
$\mathcal{J}$-relation in $I_{i-1}\setminus (I_i\cup M_i)$ such that for its
$\mathcal{J}$-class $\mathcal{J}_e$ we have $\mathcal{J}_e\nsubseteq P$. That is $e$ is an apex of a
simple $K[I_{i-1}\big/(I_i\cup M_i)]$-module $W$ with $K[\overline{I}(e)]=Ann(W)$, where
$\overline{I}(e)=\{w\in I_{i-1}\big/(I_i\cup M_i):
e\notin I_{i-1}\big/(I_i\cup M_i)wI_{i-1}\big/(I_i\cup M_i)\}$
(we know that every simple $K[I_{i-1}\big/(I_i\cup M_i)]$-module has an apex).

 Next, consider the ideal $N_e=I_i\cup M_i\cup I(e)$
in $C_n$, where $I(e)=\{w\in C_n: e\notin C_nwC_n\}$. It follows
that $\{\theta, e\}\triangleleft C_n/N_e$ ($C_n$ is
$\mathcal{J}$--trivial, see also Corollary 2.7 in
\cite{steinberg}) and thus in particular $K[C_n]\big/K[N_e]\simeq
Ke\oplus (K[C_n]\big/K[N_e])(1-e)$. It follows that in this case
the irreducible representation $\varphi$ of $K[C_n]$ is
one-dimensional and given by $ K[C_n]\twoheadrightarrow
K[C_n/N_e]\arr{ \varphi_e} K$, where for any $m\in C_n\setminus
N_e$
$$\varphi_e(m)=\begin{cases}  1 \textrm{ if }em=e \\ 0 \textrm{ if }em\in N_e\end{cases}.$$
As in the previous case, for every choice of an idempotent
$e=e^2\in I_{i-1}\setminus (M_i\cup I_i)$ we get a different
maximal ideal $P_e$ and in such a way we get all maximal ideals
in this case.

Thus we have proved the following theorem.
\begin{theorem}   \label{irred main}
Let $\varphi :K[C_n]\longrightarrow M_{j}(K)$ be an irreducible
representation of the Hecke--Kiselman algebra $K[C_n]$ over an
algebraically closed field $K$. If $\varphi(K[I_{n-3}])\neq0$ set
$i=n-2$. Otherwise take the minimal $i\in\{-1,\ldots, n-3\}$ such
that $\varphi(K[I_i])=0$.
\begin{enumerate}
    \item[1)] If $i\geqslant 0$ then
    \begin{enumerate}
        \item[a)] either $\varphi(K[M_i])\neq 0$ and the representation $\varphi$ is induced by a
         representation of $K_0[M_i]$;
        \item[b)] or $\varphi(K[M_i])=0$ and the representation $\varphi$ is one-dimensional and
        induced by an idempotent $e\in I_{i-1}\setminus I_{i}$. In particular $\varphi(e)=1$.
    \end{enumerate}
    \item[2)] If $i=-1$ then the representation $\varphi$ is one-dimensional and induced by an
    idempotent $e\in C_n\setminus I_{-1}$. In particular $\varphi(e)=1$.
\end{enumerate}
\end{theorem}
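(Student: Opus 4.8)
The plan is to organise everything around the ideal chain of Proposition~\ref{struktura} and the description of the prime spectrum in Theorem~\ref{chain}. First I would note that, $K[C_n]$ being PI and $K$ algebraically closed, the image of an irreducible representation $\varphi$ is all of $M_j(K)$ by Jacobson density, while $P=\ker\varphi$ is a \emph{maximal} ideal, since a primitive PI-algebra is simple artinian by Kaplansky's theorem. By Theorem~\ref{chain}, $P$ contains some minimal prime $J_k$, hence $P\supseteq I_k$; so the set $\{k:\varphi(K[I_k])=0\}$ is nonempty (it contains $n-2$, as $I_{n-2}=\emptyset$), and the index $i$ of the statement --- the least index with $\varphi(K[I_i])=0$, replaced by $n-2$ when that least index lies outside $\{-1,\dots,n-3\}$ --- is well defined; for $i\geqslant0$ one moreover has $\varphi(K[I_{i-1}])\neq0$ by minimality.

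If $i=-1$, then $I_{-1}\subseteq P$, so $\varphi$ factors through $K[C_n/I_{-1}]$, where $C_n/I_{-1}$ is a finite $\mathcal{J}$-trivial monoid by Proposition~\ref{struktura}(5) and Theorem~4.5.3 in \cite{denton2}. By Corollary~5.7 in \cite{steinberg}, every simple module over such a monoid is one dimensional and determined by an idempotent $e$, with $\varphi(e)=1$; by Section~\ref{idempotents} the idempotents in $C_n\setminus I_{-1}$ are exactly $1,x_1,\dots,x_n$, which gives case~2.

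If $i\geqslant0$, I would split on $K[M_i]$. Suppose first $K[M_i]\not\subseteq P$. If $P\cap K[M_i]=0$ then $P\cap K[x_nq_i]=0$ (since $K[x_nq_i]\subseteq K[M_i]$) while $I_i\subseteq P$, so $P\subseteq J_i$ by maximality of $J_i$, whence $P=J_i$, contradicting that $J_i$ is a non-maximal minimal prime (Theorem~\ref{chain}); thus $P\cap K[M_i]\neq0$. As $M_i$ is a two-sided ideal of $C_n/I_i$ (Proposition~\ref{struktura}(6)) and $M_i\subseteq I_{i-1}\setminus I_i$, the image of $K_0[M_i]$ in the simple algebra $K[C_n]/P\simeq M_j(K)$ is a nonzero ideal, hence equals it; so $P\cap K_0[M_i]$ is a maximal ideal of $K_0[M_i]$, and $\varphi$ is induced by a representation of $K_0[M_i]$ via Proposition~\ref{simple Mi}, which is case~1a. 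Suppose instead $K[M_i]\subseteq P$; then $i\geqslant1$, for otherwise $K[M_0]\subseteq P$ and $I_0\subseteq P$ would give $I_{-1}\subseteq P$ by Proposition~\ref{struktura}(3), contradicting $i=0$. Then $\varphi$ factors through the algebra of $T=I_{i-1}/(I_i\cup M_i)$, which is finite by Proposition~\ref{struktura}(2); since $\varphi(K[I_{i-1}])\neq0$, the standard fact that a simple module stays simple over an ideal acting nontrivially on it shows the underlying module is a simple $K_0[T]$-module, and $T$ being a finite $\mathcal{J}$-trivial semigroup, \cite{steinberg} gives that this module is one dimensional with apex an idempotent $e\in I_{i-1}\setminus(I_i\cup M_i)$ --- such idempotents exist by Section~\ref{idempotents} --- with $\varphi(e)=1$, which is case~1b. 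Collecting the cases proves the theorem.

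The step I expect to demand most care is the $i\geqslant0$ dichotomy: on one side, showing that $P\cap K[M_i]=0$ forces $P=J_i$ (using that $J_i$ is the \emph{unique} minimal prime disjoint from $K[x_nq_i]$, together with Kaplansky's theorem to exclude $P\subsetneq J_i$) and then matching $K_0[M_i]/(P\cap K_0[M_i])$ with $K[C_n]/P$ so that Proposition~\ref{simple Mi} applies verbatim; on the other side, checking finiteness and $\mathcal{J}$-triviality of $I_{i-1}/(I_i\cup M_i)$ and locating the apex idempotent inside $I_{i-1}\setminus(I_i\cup M_i)$. Everything else is routine, given Proposition~\ref{struktura}, Theorem~\ref{chain}, Proposition~\ref{simple Mi}, and the results of \cite{steinberg}.
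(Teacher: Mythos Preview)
Your proposal is correct and follows essentially the same route as the paper: reduce to a maximal ideal $P$ via Kaplansky, locate it above some $J_i$ using Theorem~\ref{chain}, and then split according to whether $K[M_i]\subseteq P$, invoking Proposition~\ref{simple Mi} in the first case and the representation theory of finite $\mathcal{J}$-trivial semigroups (Steinberg) in the second. One small imprecision: you write $K[x_nq_i]\subseteq K[M_i]$, but $1\notin K[M_i]$; what is true is $s_iK[x_nq_i]\subseteq K[M_i]$, so from $P\cap K[M_i]=0$ you get $s_i\alpha=0$ in $K[C_n]$ for any $\alpha\in P\cap K[x_nq_i]$, and then $\alpha=0$ since $K[x_nq_i]\simeq K[x]$ is a domain---the conclusion you need still holds.
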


\section{Maximal ideals in $K_0[M_i]$}\label{repM}
In view of Theorem~\ref{irred main}, in order to complete our
discussion of irreducible representations of $K[C_n]$, the final
step is to characterize irreducible representations of $K_0[M_i]$.
Recall that the field $K$ is algebraically closed. We start with a
construction of certain family of such representations using the
representation theory of Munn algebras, see Chapter~5 in
\cite{semalg}.

\begin{proposition}\label{M_i1}
Consider the semigroup of matrix type $M_i=\mathcal{M}^0(S_i, A_i,
B_i; P_i)$ occurring in Proposition~\ref{struktura}, where $S_i$
is the infinite cyclic semigroup generated by $s_i$. Then for
every $\lambda\in K^*$ there exists a unique irreducible
representation $\psi_{\lambda}:K_0[M_i]\longrightarrow M_r(K)$,
for some $ 1\leqslant r\leqslant |A_i|$, induced by the
irreducible representation of the cyclic group generated by $s_i$.
\end{proposition}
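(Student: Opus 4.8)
The plan is to realize $K_0[M_i]$ as a Munn algebra and read off its simple modules from the classical theory, taking some care with the fact that the base semigroup $S_i$ has no identity. By the very definition of the contracted semigroup algebra of a Rees matrix semigroup, $K_0[M_i]\cong\mathcal{M}(K[S_i];A_i,B_i;P_i)$, the algebra of $|A_i|\times|B_i|$ matrices over $K[S_i]$ with multiplication $X\circ Y=XP_iY$; here $K[S_i]$ is the (non-unital) semigroup algebra of the infinite cyclic semigroup $S_i=\langle s_i\rangle$, sitting inside $K[s_i]\cong K[x]$ as the span of $s_i,s_i^2,\ldots$. Since $K$ is algebraically closed, the one-dimensional representations $s_i\mapsto\lambda$, $\lambda\in K$, exhaust the simple $K[x]$-modules; among them, those with $\lambda\neq 0$ are exactly the ones on which $s_i$ acts invertibly, i.e. the irreducible representations of the infinite cyclic group generated by $s_i$, and these are the $\lambda$ we have to deal with.

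Fix $\lambda\in K^{*}$ and apply $s_i\mapsto\lambda$ entrywise. Since $K[S_i]\twoheadrightarrow K$ via $s_i\mapsto\lambda$ for $\lambda\neq 0$, this yields an algebra epimorphism $\varepsilon_\lambda\colon\mathcal{M}(K[S_i];A_i,B_i;P_i)\twoheadrightarrow\mathcal{M}(K;A_i,B_i;P_i(\lambda))$, where $P_i(\lambda)\in M_{n_i}(K)$, $n_i:=|A_i|=|B_i|$, is the sandwich matrix evaluated at $s_i=\lambda$. The entries of $P_i$ lie in $\langle s_i\rangle\cup\{\theta\}$, hence each is either a power $s_i^{k}$ with $k\geqslant 1$ or zero; as $P_i$ is invertible over $K(s_i)$ we have $P_i\neq 0$, so at least one entry is a genuine power and therefore $P_i(\lambda)\neq 0$. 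Setting $r=\operatorname{rank}P_i(\lambda)$ we get $1\leqslant r\leqslant n_i=|A_i|$. By the structure theory of Munn algebras over a field (Chapter~5 in \cite{semalg}), $\mathcal{M}(K;A_i,B_i;P_i(\lambda))$ has nilpotent radical and semisimple quotient $\cong M_r(K)$, hence a unique simple module, namely $K^{r}$. Composing $\varepsilon_\lambda$ with the projection onto this quotient produces the required irreducible representation $\psi_\lambda\colon K_0[M_i]\twoheadrightarrow M_r(K)$.

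For uniqueness I would invoke the description of the simple modules of a Munn algebra $\mathcal{M}(R;A,B;P)$ over a commutative ring $R$: each is determined by a simple $R$-module $W$ together with a simple module over the matrix algebra attached to the image of $P$ in $M_n(\operatorname{End}_RW)$. Taking $R=K[S_i]$ and $W=K_\lambda$, this matrix is $P_i(\lambda)$, which is nonzero precisely when $\lambda\neq 0$, and then carries exactly one simple module, of dimension $\operatorname{rank}P_i(\lambda)$; so for each $\lambda\in K^{*}$ the representation $\psi_\lambda$ is the one and only simple $K_0[M_i]$-module on which $s_i$ acts by $\lambda$. I expect the point requiring the most care to be precisely this bookkeeping around the non-unital base: showing that the ``degenerate'' character $s_i\mapsto 0$ contributes no simple module — which is exactly where one uses that the sandwich matrix has \emph{monomial} entries rather than arbitrary polynomials, so that the parameter set is genuinely $K^{*}$ — and verifying that the cited Munn-algebra classification applies even though $P_i(\lambda)$ need not be invertible, only nonzero, for which the already-known fact $P_i\neq 0$ suffices.
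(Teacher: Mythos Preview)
Your argument is correct and follows essentially the same route as the paper's: specialize $s_i\mapsto\lambda$ to land in the Munn algebra $\mathcal{M}(K;A_i,B_i;\overline{P}_i)$ over $K$, then invoke the classical fact that this algebra has a unique simple quotient $M_r(K)$ with $r=\operatorname{rank}\overline{P}_i$. The only cosmetic difference is that the paper first embeds $K_0[M_i]$ into the Munn algebra over the group algebra $K[s_i,s_i^{-1}]$ and specializes from there, whereas you specialize directly from the non-unital $K[S_i]$; the resulting composite maps are the same.

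One small factual slip worth flagging: the entries of $P_i$ lie in the cyclic \emph{monoid} $\langle s_i\rangle$ together with $\theta$, and the paper explicitly records and uses that $p_{1,1}=1$, so not every nonzero entry is $s_i^k$ with $k\geqslant 1$. This only strengthens your claim that $P_i(\lambda)\neq 0$ for $\lambda\in K^*$, but it also means your closing remark about $\lambda=0$ is slightly misdiagnosed. The reason the degenerate character contributes no simple module is not a property of the sandwich matrix; it is that the \emph{elements} $(s_i^k;a,b)$ of $M_i$ all have $k\geqslant 1$, so the entrywise map $\varepsilon_0$ itself is zero (as the paper notes after its $C_3$ example).
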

\begin{proof}
For every $i\in\{0,\ldots, n-2\}$ we have that
$M_i=\mathcal{M}^0(S_i, A_i, B_i; P_i)$, where $S_{i}$ is the
cyclic semigroup generated by $s_i=x_nq_i$, $P_i$ is a matrix of
size $B_i\times A_i$ and with coefficients in $\langle
x_nq_i\rangle\cup\{\theta\}$. Moreover, we know that $|A_i|=|B_i|$
and the sandwich matrix $P_i$, being not a zero divisor in the
matrix ring $M_{n_i}(K[s_i])$, is invertible as a matrix in
$M_{n_i}(K(s_i))$, where $K(s_i)$ is the field of rational
functions in the indeterminate $s_i$. Therefore, every algebra
$K_0[M_i]$ embeds into the algebra of matrix type
$K_0[\mathcal{M}^0(\gr(s_i), A_i, B_i, P_i)]\simeq
\mathcal{M}(K[s_i, s_i^{-1}], A_i, B_i, P_i)$, where $\gr(s_i)$ is
the infinite cyclic group generated by $s_i$. For any $\lambda\in
K^{*}$ consider the homomorphism $\varphi_{\lambda}: K[s_i]\arr{}
K$ such that $s_i\mapsto \lambda$. Then it extends to the
homomorphism $\mathcal{M}(K[s_i, s_i^{-1}], A_i, B_i,
P_i)\longrightarrow \mathcal{M}(K, A_i, B_i, \overline{P}_i)$,
where $\overline{P}_i$ is the matrix
$\overline{P}_i=(\overline{p}_{b, a})$ with $\overline{p}_{b,
a}=\varphi(p_{ba})$ for every $a\in A_i$, $b\in B_i$. Then there
exists an epimorphism $\mathcal{M}(K, A_i, B_i,
\overline{P}_i)\longrightarrow M_{r}(K)$, where
$r=\textrm{rk}(\overline{P}_i)=\sup\{k: \overline{P}_i\textrm{ has
an invertible }k\times k\textrm{ submatrix}\}$ and the induced map
$\mathcal{M}(K[s_i, s_i^{-1}], A_i, B_i, P_i)\longrightarrow
\mathcal{M}(K, A_i, B_i, \overline{P}_i)\longrightarrow M_{r}(K)$
is also an irreducible representation, see Chapter~5 in
\cite{semalg}. It follows that the restriction of the above map
gives an irreducible representation $\psi_{\lambda}:
K_0[M_i]\longrightarrow M_{r}(K)$.
\end{proof}

Recall from the definitions of the sets $A_i$, $B_i$ and sandwich
matrices $P_{i}$ in the proof of Corollary 3.23 in \cite{OW} that
there exist $a\in A_i$, $b\in B_i$ such that $p_{ba}=1$.
Identifying finite sets $A_i$ and $B_i$ with $\{1,\ldots,
{n\choose i+1}\}$ we can assume that $p_{1,1}=1$. Then
$M_{1,1}\subseteq M_i$, defined as $\{(s_i^k; 1, 1)\in M_i:
k\geqslant 1\}$, is isomorphic to the infinite cyclic semigroup.

The irreducible representations of $M_i$ described in
Proposition~\ref{M_i1} come from representations of the completely
0--simple closure of $M_i$, namely
$cl(M_i)=\mathcal{M}^0(\gr{(s_i)}, A_i, B_i, P_i)$, where
$\gr{(s_i)}$ stands for the cyclic group generated by $s_i$. In
particular, the image of $(s_i, 1, 1)$ is equal to $\lambda
e$ for an idempotent matrix $e$ of rank $1$ and some $\lambda \in
K^{*}$. Now we show that every irreducible representation of
$M_i$ is of such a form and extends to a representation of
$\mathcal{M}^0(\gr(s_i), A_i, B_i, P_i)$.

\begin{proposition}\label{repM_i}
Every irreducible representation $\varphi: K_0[M_i]\longrightarrow
M_{r}(K)$ of $K_0[M_i]$ is such that $\varphi(s_i; 1, 1)=\lambda
e$, where $\lambda\in K^{*}$ and $e$ is an idempotent of rank $1$.
Such a representation $\varphi$ can be uniquely extended to an
irreducible representation of $K_0[\mathcal{M}^0(\gr(s_i), A_i,
B_i, P_i)]$.
\end{proposition}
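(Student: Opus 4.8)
The plan is to analyze an arbitrary irreducible representation $\varphi : K_0[M_i] \longrightarrow M_r(K)$ by looking at what it does to the cyclic subsemigroup $M_{1,1} = \{(s_i^k; 1,1) : k \geqslant 1\}$ identified above, and then to bootstrap from the single element $(s_i; 1, 1)$ to the whole of $\mathcal{M}^0(\gr(s_i), A_i, B_i, P_i)$ using the $\mathcal{M}^0$ structure. First I would note that since $\varphi$ is irreducible and $K$ is algebraically closed, $K_0[M_i]/\ker\varphi \simeq M_r(K)$, which is finite dimensional by Kaplansky's theorem. Set $z = (s_i; 1, 1)$; because $p_{1,1} = 1$, the element $z$ is idempotent-like in the sense that $z^2 = (s_i^2; 1, 1)$ and more generally $z \cdot K_0[M_i] \cdot z \subseteq K[s_i]$ via the sandwich coordinate $p_{1,1} = 1$, so $\varphi(z)$ generates a commutative subalgebra of $M_r(K)$ which is a homomorphic image of $K[s_i] = K[x]$. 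The key point is that $\varphi(z) \neq 0$: if $\varphi(z) = 0$, then since $z$ generates $M_i$ as an ideal up to elements of lower $\mathcal{J}$-depth (more precisely, $M_i = M_i z M_i \cup (\text{lower terms})$, which follows from $M_i$ being $0$-simple after closure and the matrix-type structure), one shows $\varphi(M_i)$ would be nilpotent, contradicting $K_0[M_i]/\ker\varphi \simeq M_r(K)$. I would justify the nonvanishing by observing that $K_0[M_i]$ is prime (Theorem~5.8 in \cite{OW}) and that the ideal generated by $z$ is essential, so it cannot be killed by a faithful-on-a-quotient map onto a simple algebra unless $\varphi(z)\neq 0$.

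Next, knowing $\varphi(z) \neq 0$ is a nonzero element of the commutative algebra $\varphi(K[z]) \simeq K[x]/(g(x))$ for some polynomial $g$, I want to show $\varphi(z)$ is a scalar multiple of a rank-one idempotent. For this I use that $z$ is \emph{primitive} in $M_i$: since $p_{1,1} = 1$, for any $m \in M_i$ the product $z m z$ lies in $M_{1,1} \cup \{\theta\}$, i.e. $z K_0[M_i] z \subseteq K[z]$, and moreover $z K_0[M_i] z$ is exactly $K[z]$-spanned by powers of $z$ — so $\varphi(z) M_r(K) \varphi(z) \subseteq \varphi(z)\cdot(\text{commutative})\cdot$, forcing the "corner" $\varphi(z) M_r(K) \varphi(z)$ to be commutative. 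In $M_r(K)$ a nonzero element $t$ with $t M_r(K) t$ commutative must satisfy $\mathrm{rk}(t) = 1$: indeed $t M_r(K) t \cong M_{\mathrm{rk}(t)}(K)$ as algebras (via the image/coimage decomposition), and $M_k(K)$ is commutative only for $k = 1$. Hence $\varphi(z)$ has rank one, so $\varphi(z) = \lambda e$ with $e$ a rank-one idempotent and $\lambda \in K^*$ (nonzero because $\varphi(z) \neq 0$; and $\lambda\neq 0$ since $\varphi(z)=\lambda e$ with $e\neq 0$).

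For the extension statement: having $\varphi(z) = \lambda e$ with $\lambda \neq 0$ means that on the cyclic semigroup $\langle s_i \rangle$, the induced map $s_i^k \mapsto \lambda^k e$ factors through the group completion $\gr(s_i) = \langle s_i, s_i^{-1}\rangle$ by sending $s_i^{-1} \mapsto \lambda^{-1} e$ (acting as $\lambda^{-1}$ on the range of $e$). Now I invoke the structure of $K_0[M_i]$ inside the Munn algebra: by Proposition~\ref{M_i1}'s proof, $K_0[M_i]$ embeds into $\mathcal{M}(K[s_i, s_i^{-1}], A_i, B_i, P_i)$, and since the sandwich matrix $P_i$ is invertible over $K(s_i)$, every nonzero element of $K_0[M_i]$ "sees" the $s_i$-action through $\varphi(z)$ — concretely, for $(s_i^k; a, b) \in M_i$ one has $z_a \cdot (s_i^k; a, b) \cdot z_b = (s_i^{k + \deg}; 1, 1)$ up to the relevant sandwich entries, where $z_a, z_b$ are suitable corner idempotent preimages, so $\varphi$ is entirely determined by $\lambda$ together with the finite data of $\overline{P}_i = \varphi(P_i)$. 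This forces $\varphi$ to coincide (on $K_0[M_i]$) with the restriction of some $\psi_\lambda$ from Proposition~\ref{M_i1}, and since $\psi_\lambda$ was defined on all of $K_0[\mathcal{M}^0(\gr(s_i), A_i, B_i, P_i)]$, we get the desired extension; uniqueness follows because $\mathcal{M}^0(\gr(s_i), A_i, B_i, P_i)$ is generated as an algebra by $M_i$ together with $z^{-1} = (s_i^{-1}; 1, 1)$, whose image $\lambda^{-1} e$ is already determined. The main obstacle I anticipate is the nonvanishing $\varphi(z) \neq 0$ and, relatedly, pinning down precisely why the only commutative data surviving in the image is scalar — i.e.\ ruling out that $\varphi(z)$ is a higher-rank idempotent or a non-semisimple nilpotent-plus-idempotent; this is where primeness of $K_0[M_i]$, invertibility of $P_i$ over $K(s_i)$, and the corner-commutativity argument all have to be combined carefully.
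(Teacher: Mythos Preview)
Your overall strategy matches the paper's, but the central step---deducing that $\varphi(z)$ has rank one from the commutativity of $\varphi(z) M_r(K) \varphi(z)$---contains a genuine error. The claim ``$t M_r(K) t \cong M_{\mathrm{rk}(t)}(K)$ as algebras'' is false when $t$ has a nontrivial nilpotent part: for instance, if $t = E_{12} + E_{34} \in M_4(K)$ then $\mathrm{rk}(t)=2$, yet $t M_4(K) t$ is spanned by $E_{12}, E_{14}, E_{32}, E_{34}$ and has zero multiplication, hence is commutative. So corner--commutativity alone does not force rank one; you must first exclude a nilpotent contribution to $\varphi(z)$, and this is exactly the obstacle you flag at the end but do not resolve.

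The paper closes this gap in two moves. First it shows directly that $\varphi(z)$ is not nilpotent, by the same mechanism as the nonvanishing: for any $(s_i^k;a,b)$ one factors a high power through $(s_i^{N};1,1)$, so nilpotence of $\varphi(z)$ would make $\varphi(M_i)$ nil and hence $M_r(K)$ nilpotent. Second, it proves a sharper statement than commutativity, namely Lemma~\ref{domain}: $\varphi(K[M_{1,1}])$ is an integral \emph{domain}, hence isomorphic to $K$ (so one-dimensional). With non-nilpotence in hand, one passes to $\varphi(z)^k$ for $k$ large, which lies in a maximal subgroup of $(M_r(K),\cdot)$; then $\varphi(z)^k M_r(K)\varphi(z)^k = eM_r(K)e$ for an idempotent $e$, and commutativity now legitimately gives $\mathrm{rk}(e)=1$. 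Finally the one-dimensionality of $\varphi(K[M_{1,1}])$ is used again to kill any residual nilpotent block in the decomposition $\varphi(z) = g\,\mathrm{diag}(a_0,a_1)\,g^{-1}$: if $a_1\neq 0$ then $\varphi(z)$ and $\varphi(z)^k$ are linearly independent, contradicting $\dim_K \varphi(K[M_{1,1}])=1$. Your primeness-based justification for $\varphi(z)\neq 0$ is less direct than this nil-argument and does not obviously adapt to give non-nilpotence.

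Your extension argument is also too schematic. The paper does not merely assert that $\varphi$ must coincide with some $\psi_\lambda$; it writes down the extension explicitly, $\overline{\varphi}(s_i^{p}; a, b) = \varphi(s_i; a, 1)\,\lambda^{p-2}\,e\,\varphi(s_i; 1, b)$, checks by hand that this respects the sandwich product, and then shows $\overline{\varphi}$ agrees with $\varphi$ on the ideal $N_i = \{(s_i^k;a,b):k\geqslant 3\}$, which (since $\varphi(N_i)=M_r(K)$ contains the identity) forces agreement on all of $K_0[M_i]$. Your claim that ``$\varphi$ is entirely determined by $\lambda$ together with $\overline{P}_i$'' is morally right but needs this kind of concrete verification.
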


Consider any irreducible representation $\varphi:
K_0[M_i]\longrightarrow M_{r}(K)$, where $M_i=\mathcal{M}^0(S_i,
A_i, B_i; P_i)$. In particular, as $K$ is algebraically closed,
the representation is onto. The first step of the proof is to
investigate the image of $M_{1,1}$ under $\varphi$. Let us notice that
$\varphi(M_{1,1})$ is nonzero. Indeed, if we had in particular
$\varphi(s_i; 1, 1)=0$, then for any $(s_i^k; a, b)\in M_i$ we
would have $\varphi((s_i^k; a, b)^{N_0})=\varphi(s_i; a,
1)\varphi(s_i^{N_{ba}}; 1, 1)\varphi(s_i; 1, b)=0$ for $N_0$ big
enough and an appropriately chosen exponent $N_{ba}$.
Therefore $\varphi(M_i)$ would be nil and thus nilpotent in the
monoid $(M_r(K), \cdot)$, which leads to a contradiction. A
similar argument shows that $\varphi(s_i, 1, 1)$ cannot be
nilpotent.

\begin{lemma}\label{domain}
The image $\varphi(K[M_{1,1}])$ is a commutative integral domain.
Consequently, $\varphi(K[M_{1,1}])\simeq K$.
\end{lemma}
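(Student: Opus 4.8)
The plan is to show that $A := \varphi(K[M_{1,1}])$ is a finite-dimensional commutative $K$-algebra that is a domain, hence a field, hence equal to $K$ since $K$ is algebraically closed. First I would record the structural facts already available: $M_{1,1} = \{(s_i^k;1,1): k \geqslant 1\}$ is an infinite cyclic semigroup with the multiplication $(s_i^k;1,1)(s_i^\ell;1,1) = (s_i^{k+\ell+c};1,1)$ where $c \geqslant 0$ is the exponent of $p_{1,1} = 1$ (so in fact $c = 0$, but I would not need this). Thus $K[M_{1,1}]$ is commutative, and so is its homomorphic image $A$; moreover $A$ is generated by the single matrix $u := \varphi(s_i;1,1)$, so $A = K[u]$ is finite-dimensional over $K$ (being a subalgebra of $M_r(K)$).

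The heart of the argument is to prove $A$ has no nonzero nilpotents, equivalently that the matrix $u$ is diagonalizable with a single nonzero eigenvalue of a rank-one type behaviour — but more cleanly, that $K[u]$ is reduced. Here I would use irreducibility of $\varphi$. Since $\varphi$ is onto $M_r(K)$ and $K$ is algebraically closed, $M_r(K)$ is a simple $K[M_i]$-module (up to the standard identification), and $A$ acts on it. Suppose $a \in A$ is nilpotent. The two-sided ideal $\varphi(K_0[M_i]) \, a \, \varphi(K_0[M_i]) = M_r(K)\, a\, M_r(K)$ would be a nonzero ideal of the simple ring $M_r(K)$ only if $a \neq 0$; but I want to derive a contradiction from nilpotence, not from nonvanishing. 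The clean route: the element $(s_i;1,1)$ generates, via left and right multiplication inside $M_i$, an ideal whose image under $\varphi$ is all of $M_r(K)$ (this is essentially why $\varphi(M_{1,1}) \neq 0$ forces surjectivity, using the matrix-type structure and that $p_{1,1}=1$). If $u = \varphi(s_i;1,1)$ were nilpotent, then — arguing as in the paragraph preceding the lemma, where a similar computation shows $\varphi(M_i)$ would be nil — every $\varphi(s_i^k; a, b) = \varphi(s_i;a,1)\,u^{N}\,\varphi(s_i;1,b)$ would vanish for large powers, making $\varphi(M_i)$ nil, hence (being a finite set of matrices generating $M_r(K)$) giving a nil, thus nilpotent, hence not full, image: contradiction with surjectivity onto $M_r(K)$, $r \geqslant 1$. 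So $u$ is not nilpotent. For a general nilpotent $a \in A = K[u]$: write $a = f(u)$; since $u$ is a matrix, decompose according to its eigenvalues. The same "high power kills it" computation, now applied with $a$ in place of $u$ at the central slot $(1,1)$ and run through all of $M_i$, shows that if $a$ is nilpotent then the ideal it generates is nil, contradicting simplicity of $M_r(K)$ unless $a = 0$.

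The main obstacle I anticipate is making the "ideal generated by a central-slot element is everything (or is nil)" step fully rigorous in the semigroup-of-matrix-type language: one must check that for arbitrary $(s_i^m;a,b) \in M_i$ there are elements of $M_i$ sandwiching it into a power of $(s_i;1,1)$, which requires that the rows and columns of $P_i$ indexed by $1$ interact nontrivially with every index — this is where invertibility of $P_i$ over $K(s_i)$ (recorded before Proposition~\ref{M_i1}) and $p_{1,1}=1$ get used. Once that is in hand, the computation $\varphi((s_i^m;a,b)) = \varphi((s_i;a,1)) \, \varphi((s_i^{N};1,1)) \, \varphi((s_i;1,b))$ for suitable $N$ transfers nilpotence from the $(1,1)$-slot to all of $\varphi(M_i)$.

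Finally, assembling: $A$ is a finite-dimensional commutative reduced $K$-algebra, hence a finite product of finite field extensions of $K$; being a subalgebra of $M_r(K)$ it could a priori be such a product, but $A$ is generated by the \emph{single} idempotent-free-ish element $u$ whose image ideal is all of $M_r(K)$, forcing $A$ to contain no nontrivial idempotents (a nontrivial central idempotent of $A \subseteq M_r(K)$ would, by the matrix-type multiplication sending $(1,1)$-slot elements through the whole of $M_i$, split $M_r(K)$ as a $K[M_i]$-module, contradicting simplicity). Hence $A$ is a single field extension of $K$; as $K$ is algebraically closed, $A = K$. This gives $\varphi(K[M_{1,1}]) \simeq K$ and in particular $\varphi(s_i;1,1) = \lambda e$ with $\lambda \in K^{*}$ and $e = e^2$, as needed downstream in Proposition~\ref{repM_i}.
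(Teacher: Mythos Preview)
Your overall frame is right---$A=\varphi(K[M_{1,1}])$ is commutative (since $p_{1,1}=1$ makes $K[M_{1,1}]\cong K[s_i]s_i$), finite-dimensional, and the goal is to show it is a domain, whence $A=K$. But the heart of the argument, ruling out zero divisors, is not carried through. Your ``high power kills it'' step works for $u=\varphi(s_i;1,1)$ itself: nilpotence of $u$ makes every $\varphi(s_i^m;a,b)$ eventually vanish because large powers of any element of $M_i$ factor through a power of $(s_i;1,1)$. That argument does \emph{not} transfer to an arbitrary nilpotent $a=f(u)\in A$: nothing in $M_i$ factors through $(f(s_i);1,1)$, so you cannot conclude that ``the ideal $a$ generates is nil.'' Likewise your idempotent step is not sound as stated: an idempotent central in $A$ need not be central in $M_r(K)$, so there is no obvious splitting of $M_r(K)$ as a $K_0[M_i]$-module.

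What is missing is exactly the device the paper uses: given $\alpha=(f;1,1)$ with $\varphi(\alpha)\neq 0$, form the ideal
\[
I=\{(y_{ab})\in K_0[M_i]:\ y_{ab}\in K[s_i]f\ \text{for all }a,b\},
\]
and similarly $J$ for $\beta=(g;1,1)$. Because the sandwich matrix has entries in $\langle s_i\rangle\cup\{\theta\}$, one gets $IJ\subseteq\{(y_{ab}):y_{ab}\in K[s_i]fg\}$; simplicity of $M_r(K)$ forces $\varphi(I)=\varphi(J)=M_r(K)$, and then
\[
0\neq \varphi(s_i;1,1)M_r(K)\varphi(s_i;1,1)=\varphi\bigl((s_i;1,1)IJ(s_i;1,1)\bigr)\subseteq\varphi\bigl((K[s_i]fg;1,1)\bigr)=0
\]
whenever $\varphi(\alpha)\varphi(\beta)=0$. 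This single computation handles zero divisors directly, making your detour through ``reduced $+$ no idempotents'' unnecessary; and if you did want that detour, each half would still require precisely this ideal construction. The obstacle you flag (sandwiching arbitrary $(s_i^m;a,b)$ into the $(1,1)$ slot) is real for the $u$-nilpotent step but is already handled before the lemma; the genuine missing idea is the globalization of a $(1,1)$-slot relation to an ideal of $K_0[M_i]$ via the ``all entries in $K[s_i]f$'' construction.
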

\begin{proof}
Note that in the sandwich matrix $P_i$ we have $p_{1, 1}=1$, that is $K[M_{1,1}]$
is isomorphic to $K[s_i]s_i$ and in particular $\varphi(K[M_{1,1}])$ is commutative.\\
Suppose now that $\varphi(K[M_{1,1}])$ is not an integral domain, that is $uv=0$
for some nonzero $u, v\in \varphi(K[M_{1,1}])$ with $u=\varphi(\alpha)$,
$v=\varphi(\beta)$, where $\alpha=(f; 1, 1)$, $\beta=(g; 1, 1)$.
Define ideals in $K_0[M_i]$ as follows.
$$I=\{y=(y_{ab})\in K_{0}[M_i]: y_{ab}\in K[s_i]f \textrm{ for all } a\in A_i, b\in B_i\}$$
$$J=\{y=(y_{ab})\in K_{0}[M_i]: y_{ab}\in K[s_i]g \textrm{ for all } a\in A_i, b\in B_i \}$$
Then
$IJ\subseteq\{y=(y_{ab})\in K_{0}[M_i]: y_{ab}\in K[s_i]fg \textrm{ for
all } a\in A_i, b\in B_i\}$ and as $\varphi(I)\neq 0$,
$\varphi(J)\neq 0$, it follows that
$\varphi(I)=\varphi(J)=M_r(K)$. Therefore,
$$\varphi((s_i; 1, 1)IJ(s_i; 1, 1))=\varphi(s_i; 1,
1)M_{r}(K)\varphi(s_i; 1, 1)\neq 0.$$ On the other hand
$\varphi((s_i; 1, 1)IJ(s_i; 1, 1))\subseteq\varphi((K[s_i]fg; 1,
1))=0$, as $\varphi((f; 1, 1)(g; 1, 1))=uv=0$, which leads to a
contradiction. Consequently, $\varphi(K[M_{1,1}])$ is a
finite dimensional domain and, as $K$ is algebraically closed, it
follows that $\varphi(K[M_{1,1}])\simeq K$.
\end{proof}

Now we are ready to prove Proposition~\ref{repM_i}.
\begin{proof}
As explained earlier, if we denote $s=(s_i; 1, 1)$, then
$\varphi(s)$ is nonzero and not nilpotent. Therefore for $k$
big enough $\varphi(s)^k$ lies in a maximal subgroup of the monoid
$(M_n(K),\cdot)$. Then $\varphi:
s^{k}K_0[M_i]s^{k}\longrightarrow\varphi(s)^kM_{r}(K)\varphi(s)^k=eM_{r}(K)e$
for some idempotent $e$. Moreover, by Lemma~\ref{domain},
$s^{k}K_{0}[M_i]s^{k}$ is contained in a commutative ring
$K[M_{1,1}]$, thus it follows that $\textrm{rk}(e)=1$. Now, we
have that $ \varphi(s)=g\begin{pmatrix} a_0 & 0 \\ 0 & a_1
\end{pmatrix} g^{-1}
$ for matrix $a_0\in M_{p}(K)$, such that  $\textrm{rk}
((a_0)^{2}) =\textrm{rk}(a_{0})$, a nilpotent matrix $a_1\in
M_{r-p}(K)$, where
$g\in Gl_{r}(K)$ and $\varphi(s^k)=g\begin{pmatrix} a_0^{k} & 0 \\
0 & 0
\end{pmatrix} g^{-1}$. As $\varphi(s^k)\in eM_r(K)e$ for an idempotent $e$ of rank $1$,
it follows that $a_0\in K^{*}$. Moreover, if $a_1\in M_{r-1}(K)$ is nonzero,
then $g\begin{pmatrix} a_0^k & 0 \\ 0 & 0
\end{pmatrix} g^{-1}$,
$g\begin{pmatrix} a_0 & 0 \\ 0 & a_1
\end{pmatrix} g^{-1}\in \varphi(K[M_{1,1}])$, so $\dim_{K}\varphi(K[M_{1,1}])\geqslant 2$,
which contradicts the assertion of Lemma~\ref{domain}. Thus in particular
$\varphi(s_i; 1, 1)=\lambda e$ for $\lambda\in K^{*}$ and $e=e^2$ with $\textrm{rk}(e)=1$.\\
Now consider the irreducible representation $\varphi:
K_0[M_i]\longrightarrow M_r(K)$ such that $\varphi(s_i; 1,
1)=\lambda e$. As $M_i=\{(s_i^m; a, b) : m\geqslant 1, a\in A_i,
b\in B_i\}$ and the sandwich matrix $P_i$ has $p_{1,1}=1$, we have
$\varphi(s_i^m; a, b)=\varphi(s_i; a, 1)\varphi(s_i^{m-2}; 1,
1)\varphi(s_i; 1, b)= \varphi(s_i; a, 1)\lambda^{m-2}e\varphi(s_i;
1, b)$ for $m\geqslant 3$. Let us define the extension $\overline{\varphi}:
K_0[\mathcal{M}^0(\gr(s_i), A_i, B_i; P_i)]\longrightarrow
M_{r}(K)$ for any $(s_i^{p}; a, b)$, where $p\in\mathbb{Z}$, $a\in
A_i$, $b\in B_i$ by
$$\overline{\varphi}(s_i^{p}; a, b) = \varphi(s_i; a, 1)\lambda^{p-2}e\varphi(s_i; 1, b).$$
To verify that $\overline{\varphi}$ is a homomorphism take any
$(s_i^n; a, b)$, $(s_i^m; a', b')\in K_{0}[M_i]$. Then either
$(s_i^n; a, b) (s_i^m; a', b')=(s_{i}^{n+m+n_{ba'}}; a, b')$ if
$p_{ba'}=s_i^{n_{ba'}}$ or $(s_i^n; a, b)(s_i^m; a', b')=0$ if
$p_{ba'}=\theta$. In the first case also $(s_i; 1, b)\cdot (s_i;
a', 1)=(s_i^{n_{ba'}+2}; 1, 1)$. We then get
\begin{align*}
\overline{\varphi}(s_i^n; a, b)\overline{\varphi}(s_i^m; a', b')&=
\varphi(s_i; a, 1)\lambda^{n-2}e\varphi((s_i; 1, b)\cdot (s_i; a', 1))\lambda^{m-2}e\varphi(s_i; 1, b')=
\\&= \varphi(s_i; a, 1)\lambda^{n-2}e\varphi((s_i^{n_{ba'}+2}; 1, 1)\lambda^{m-2}e\varphi(s_i; 1, b')=
\\&= \varphi(s_i; a, 1)\lambda^{n+m+n_{ba'}-2}e\varphi(s_i; 1, b')=\\&=
\overline{\varphi}(s_{i}^{n+m+n_{ba'}}; a, b').
 \end{align*}
Moreover $(s_i^n; a, b)(s_i^m; a', b')=0$ if and only in $(s_i; 1, b)(s_i; a', 1)=0$,
and thus in the second case $$\overline{\varphi}(s_i^n; a, b)\overline{\varphi}(s_i^m; a', b')=
\overline{\varphi}((s_i^n; a, b) (s_i^m; a', b'))=0.$$

Therefore $\overline{\varphi}$ is a homomorphism. Let us denote
$N_{i}=\{(s_i^k; a, b): k\geqslant 3, a\in A_i, b\in
B_i\}\subseteq M_i$. Then $K_0[N_i]$ is an ideal in $K_0[M_i]$ and
it is clear that
$\overline{\varphi}|_{K_{0}[N_i]}=\varphi|_{K_{0}[N_i]}$. Moreover
$\varphi(K_0[N_i])=M_{r}(K)$ and in particular $\varphi(f)=1$ for
some $f\in K_{0}[N_i]$. Then for any $g\in K_{0}[M_i]$ we have
$\varphi(g)=\varphi(g)\varphi(f)=\varphi(gf)$. It follows that
$\varphi$ is uniquely determined by $\varphi|_{K_0[N_i]}$.
Therefore $\overline{\varphi}|_{K_0[M_i]}=\varphi$, in other words
the irreducible representation $\varphi$ can be extended to the
representation $\overline{\varphi}$ of
$K_0[\mathcal{M}^0(\gr(s_i), A_i, B_i; P_i)]$. Uniqueness of the
extension follows from the classification of irreducible
representations of completely $0$-simple semigroups.
\end{proof}

\section{Irreducible representations of $K[C_3]$}
Let us illustrate  the results with the simplest case of
the Hecke--Kisleman monoid $C_3$ associated to the cycle of length $3$.
We start with the representations induced by the irreducible representations of
the semigroups of matrix type inside the Hecke--Kiselman algebra $K[C_3]$.
Recall from \cite{OW} that in the monoid $C_3$ we have two semigroups of matrix
type $M_0$ and $M_1$. More precisely, let $T$ be the cyclic semigroup generated by $t=x_3x_1x_2$.
Then $M_1=(C_3x_3x_1x_2C_3)^{0}$ is
a semigroup of matrix type $\mathcal{M}^0(T, A_1, B_1; P_1)$, where
$A_1=\{1, x_2, x_1x_2\}$, $B_1=\{1, x_3, x_3x_1\}$, with sandwich matrix
(with coefficients in $T^1$)
\begin{equation*}
P_1=\begin{pmatrix}
1 & 1 & 1\\1 & 1 & t\\ 1 & t & t
\end{pmatrix}.
\end{equation*}

Then if $\lambda \neq 0, 1$ we have that $\overline{P}_1$ is a
matrix of rank $3$ and therefore we get the family of
representations $\psi_{\lambda}: K_0[M_1]\longrightarrow
M_{3}(K)$. In this case the epimorphism $\mathcal{M}(K, A_1, B_1,
\overline{P}_1)\longrightarrow M_{3}(K)$ is given by $A\mapsto
A\circ \overline{P}_1$. Therefore, the representation
$\psi_{\lambda}$ is given for every $(t^k;a, b)\in M_1$ by

$$(t^k; a, b)\mapsto M_{(\lambda^k; a, b)}\circ \begin{pmatrix}
1 & 1 & 1\\1 & 1 & \lambda\\ 1 & \lambda & \lambda
\end{pmatrix}, $$
where $\circ$ is the standard matrix multiplication and $ M_{(\lambda^k; a, b)}\in M_{3}(K)$ is a matrix
with the only non-zero entry $(a, b)$ equal to $\lambda^k$.

For $\lambda=1$ the matrix $\overline{P}_i$ is of rank $1$ and
therefore we get the one-dimensional representation $\psi_{1}:
K_0[M_1]\longrightarrow K$.

Similarly, let $S$ be the cyclic semigroup generated by  $s=x_3x_2x_1$. Then the semigroup
$M_0=I_{-1}/I_0$ is a semigroup of matrix type $\mathcal{M}^0(S, A_0, B_0; P_0)$,
$A_0=\{1, x_1, x_2x_1\}$, $B_0=\{1, x_3, x_3x_2\}$, with sandwich matrix (with coefficients in
$S^1\cup\{\theta\}$)
\begin{equation*}
P_0=\begin{pmatrix}
1 & 1 & \theta\\1 & \theta & s\\ \theta & s & s
\end{pmatrix}.
\end{equation*}
Then, if $\lambda \neq 0, -1$ then we have that $\overline{P}_0$
is a matrix of rank $3$ and therefore we get the family of
representations $\psi_{\lambda}: K_0[M_0]\longrightarrow
M_{3}(K)$. The representation $\psi_{\lambda}$ is given for every
$(s^k;a, b)\in M_0$ by

$$(s^k; a, b)\mapsto M_{(\lambda^k; a, b)}\circ \begin{pmatrix}
1 & 1 & 0\\1 & 0 & \lambda\\ 0 & \lambda & \lambda
\end{pmatrix}, $$
where $\circ$ is the standard matrix multiplication and $ M_{(\lambda^k; a, b)}\in M_{3}(K)$
is a matrix with the only non-zero entry $(a, b)$ equal to $\lambda^k$.

Moreover, for $\lambda=-1$ the matrix $\overline{P}_0$ has rank
$2$ and therefore the corresponding representation is $\psi_{-1}:
K_0[M_0]\longrightarrow M_{2}(K)$.

Note also that for $\lambda=0$ and any $i$ the induced
homomorphism $K_0[M_i]\longrightarrow \mathcal{M}(K, A_i,
B_i;\overline{P_i})$ is the zero map.

From the results in Section~\ref{idempotents} it follows that
$\{1, x_1, x_2, x_3, x_1x_2, x_2x_3, x_3x_1\}$ is the set of
idempotents in $C_3$ and $1, x_1, x_2, x_3\in C_3\setminus
I_{-1}$, $x_1x_2, x_2x_3, x_3x_1\in I_0\setminus I_{1}$ (note that
$I_1=\emptyset$). Thus, we get (Theorem~\ref{irred main}) that
irreducible representations of $K[C_3]$ either come from the
representations of $K_0[M_0]$ or $K_0[M_1]$ described above or are
one-dimensional representations associated to one of the
idempotents in the monoid $C_3$.

\section{Representations of PI Hecke--Kiselman algebras}

In this section we describe irreducible representations of
arbitrary Hecke--Kiselman algebras satisfying a polynomial identity. Recall
from \cite{mecel_okninski1} that this condition can be
characterized by certain property of the corresponding graph.
Namely, $K[\HK_{\Theta}]$ is a PI-algebra if and only if $\Theta$
does not contain two  cyclic subgraphs (that is subgraphs which
are oriented cycles) connected by an oriented path.

The radical of the algebra $K[\HK_{\Theta}]$ in this case was described in Theorem~2 in
\cite{semiprime}. Assume that $\Theta'$ is the subgraph of
$\Theta$ obtained by deleting all arrows $x\rightarrow y$ that are
not contained in any cyclic subgraph of $\Theta$. Every connected
component of $\Theta'$ is either a singleton or an oriented cycle.
Then $K[\HK_{\Theta}]/{\mathcal J}(K[\HK_{\Theta}])\cong
K[\HK_{\Theta'}]$ and it is the tensor product of algebras
$K[\HK_{\Theta_{i}}]$ of the connected components
$\Theta_1,\ldots, \Theta_m$ of $\Theta'$, each being isomorphic to
$K\oplus K$ or to the algebra $K[C_{j}]$, for some $j\geqslant 3$.

\begin{theorem}\label{rep-pi}
Assume that $\Theta$ is a finite oriented graph such that
$K[\HK_{\Theta}]$ is a PI-algebra and $\Theta'$ is the subgraph of
$\Theta$ as described above, with the connected components
$\Theta_1,\ldots, \Theta_m$.

Then the maximal ideals of $K[\HK_{\Theta}]$
are in a bijection with maximal ideals of $K[\HK_{\Theta'}]$. The
latter maximal ideals are of the form
$$\sum_{i=1}^{m}K[\HK_{\Theta_{1}}]\otimes\cdots \otimes
K[\HK_{\Theta_{i-1}}]\otimes P_i\otimes K[\HK_{\Theta_{i+1}}]\otimes\cdots\otimes K[\HK_{\Theta_{m}}]$$
for maximal ideals $P_i$ of $K[\HK_{\Theta_{i}}]$. \\
Conversely, every ideal of $K[\HK_{\Theta'}]$ of this form is
maximal.
\end{theorem}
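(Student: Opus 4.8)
The plan is to split Theorem~\ref{rep-pi} into two independent parts: (a) the bijection between maximal ideals of $K[\HK_{\Theta}]$ and maximal ideals of $K[\HK_{\Theta'}]$, which rests only on the structure of the Jacobson radical, and (b) the description of $\operatorname{Max}(K[\HK_{\Theta'}])$ in terms of the tensor factors $K[\HK_{\Theta_i}]$, which is a general statement about maximal ideals of a tensor product of affine PI-algebras over an algebraically closed field.

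For part (a) I would argue that every maximal two-sided ideal $M$ of $R:=K[\HK_{\Theta}]$ contains $\mathcal{J}(R)$. Indeed, $R/M$ is a simple PI-algebra, hence simple Artinian by Kaplansky's theorem (Theorem~1.5.16 in \cite{Rowen}), so $\mathcal{J}(R/M)=0$; since the radical maps into the radical of any quotient, $\mathcal{J}(R)\subseteq M$. Conversely, ideals of $R$ containing $\mathcal{J}(R)$ correspond to ideals of $R/\mathcal{J}(R)\cong K[\HK_{\Theta'}]$ (the last isomorphism being the result of \cite{semiprime} recalled before the theorem), and this correspondence preserves maximality. This yields the bijection $\operatorname{Max}(K[\HK_{\Theta}])\leftrightarrow\operatorname{Max}(K[\HK_{\Theta'}])$.

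For part (b), write $A=A_1\otimes\cdots\otimes A_m$ with $A_i=K[\HK_{\Theta_i}]$, each being $K\oplus K$ or $K[C_j]$. Note $A\cong K[\HK_{\Theta'}]$ is again a Hecke--Kiselman algebra whose graph has no two cyclic subgraphs joined by an oriented path, hence is PI (by the criterion from \cite{mecel_okninski1}); being affine over the algebraically closed field $K$, its simple quotients are of the form $M_r(K)$, as in the proof of Theorem~\ref{chain}. Let $M$ be a maximal ideal and $\rho\colon A\twoheadrightarrow A/M\cong M_r(K)$. Put $B_i=\rho(A_i)\subseteq M_r(K)$, where $A_i$ is identified with $1\otimes\cdots\otimes A_i\otimes\cdots\otimes 1$; then $B_1,\ldots,B_m$ pairwise commute and together generate $M_r(K)$, so $M_r(K)=B_1B_2\cdots B_m$. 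I would first show each $B_i$ is semisimple: $B_i$ is finite-dimensional, so $\mathcal{J}(B_i)$ is a nilpotent ideal, and since $B_i$ commutes with the remaining factors, the two-sided ideal of $M_r(K)$ generated by $\mathcal{J}(B_i)$ is contained in $\mathcal{J}(B_i)\cdot(\prod_{j\neq i}B_j)$ and is therefore nilpotent, hence zero; thus $\mathcal{J}(B_i)=0$. Next, $B_i$ is simple: a central idempotent of $B_i$ commutes with every $B_j$ ($j\neq i$) and with $B_i$ itself, hence is central in $M_r(K)$, so equals $0$ or $1$. Consequently $M_i:=\ker(\rho|_{A_i})$ is a maximal ideal of $A_i$ with $A_i/M_i\cong M_{r_i}(K)$. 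Finally $\rho$ factors through $A\twoheadrightarrow A/N$, where $N=\sum_i A_1\otimes\cdots\otimes M_i\otimes\cdots\otimes A_m\subseteq M$, and $A/N\cong M_{r_1}(K)\otimes\cdots\otimes M_{r_m}(K)\cong M_{r_1\cdots r_m}(K)$ is simple; hence $N$ is maximal, and $N\subseteq M$ forces $N=M$. For the converse, if $P_i$ is maximal in $A_i$ then $A_i/P_i\cong M_{r_i}(K)$, so $A/N'\cong\bigotimes_i M_{r_i}(K)\cong M_{r_1\cdots r_m}(K)$ is simple for $N'=\sum_i A_1\otimes\cdots\otimes P_i\otimes\cdots\otimes A_m$, whence $N'$ is maximal.

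I expect the main obstacle to be the middle step of part (b): verifying that the coordinate images $B_i=\rho(A_i)$ are simple subalgebras of $M_r(K)$ (the semisimplicity-via-nilpotent-ideal argument together with the central-idempotent argument). Once this is established, everything else is formal manipulation with tensor products of full matrix algebras, combined with the already-available description of $\mathcal{J}(K[\HK_{\Theta}])$ and the PI property. A minor point needing care throughout is the justification that simple PI quotients of these affine $K$-algebras are genuinely $M_r(K)$ (and not matrices over a proper division algebra or a nontrivial field extension), which follows from Kaplansky's theorem and the Nullstellensatz for affine PI-algebras.
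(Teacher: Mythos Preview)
Your proposal is correct and follows essentially the same route as the paper: the bijection via $K[\HK_{\Theta}]/\mathcal{J}(K[\HK_{\Theta}])\cong K[\HK_{\Theta'}]$, then for a maximal ideal $M$ the projection $\rho$ onto $M_r(K)$, the argument that each coordinate image $B_i=\rho(A_i)$ is semisimple (nilpotent ideals propagate to a nilpotent ideal of $M_r(K)$) and simple (a central idempotent of $B_i$ is central in $M_r(K)$), and the identification $M=\sum_i A_1\otimes\cdots\otimes M_i\otimes\cdots\otimes A_m$. The only differences are cosmetic: the paper reduces to $m=2$ whereas you handle general $m$ directly, and you correctly write $r_1\cdots r_m$ where the paper has the typo $r_1+r_2$; your remark that one also needs the Nullstellensatz (not just Kaplansky) to force the center of the simple quotient to be $K$ is a point the paper glosses over.
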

Note that maximal ideals of $K[C_j]$ have been characterized in
the previous sections of the paper and if $\Theta_{i}$ is a singleton, then $K[\HK_{\Theta_{i}}]\cong K\oplus K$, has two obvious
maximal ideals.

\begin{proof}
As we know that $K[\HK_{\Theta}]/{\mathcal
J}(K[\HK_{\Theta}])\cong K[\HK_{\Theta'}]$ it is clear that there
exists a one-to-one correspondence between maximal ideals of
$K[\HK_{\Theta}]$ and those of $K[\HK_{\Theta'}]$.

So, it remains to find all maximal ideals in $K[\HK_{\Theta'}]$.
Without loss of generality we can assume that $\Theta'$ has only
two connected components, that is $K[\HK_{\Theta'}]=R_1\otimes
R_2$, where $R_i=K[\HK_{\Theta_i}]$ are isomorphic to either
$K[C_j]$ or $K \oplus K$. Let $P$ be a maximal ideal of
$K[\HK_{\Theta'}]$ and $\pi: R_1\otimes R_2\rightarrow (R_1\otimes
R_2) / P$ the natural projection. Since $R_1\otimes R_2$ is a
PI-algebra over an algebraically closed field, from Kaplansky theorem it follows that
$(R_1\otimes R_2) / P\simeq M_{r}(K)$ for some $r\geqslant 1$. Denote  $\bar{R_1}=\pi(R_1\otimes
K)$ and $\bar{R_2}=\pi(K\otimes R_2)$.
Recall that the Jacobson
radical of a finitely generated PI-algebra is nilpotent,
Theorem~6.3.39 in \cite{Rowen-ring}. We claim that the algebras
$\bar{R_i}$ are semisimple. Indeed, let $N_1/P$ be a nilpotent
ideal in $\bar{R_1}$. Then $N_1(1\otimes R_2)$ is a nilpotent ideal
in $K[\HK_{\Theta'}]$, as $N_1(1\otimes R_2)=(1\otimes R_2)N_1$.
It follows that $N_1=0$ and $\bar{R_1}$ is semisimple, as it is finite dimensional. A symmetric
argument shows that $\bar{R_2}$ is also semisimple.
If $\bar{R_i}$ was not simple, then it would contain a non-trivial central idempotent. This idempotent would be
then also central in  $K[\HK_{\Theta'}] / P $, a contradiction. Therefore
$\bar{R_i}\simeq M_{r_i}(K)$ for $i = 1, 2$. Projection $\pi :
K[\HK_{\Theta'}] \rightarrow K[\HK_{\Theta'}] / P $ factors
through $ K[\HK_{\Theta'}]
\xrightarrow{\bar{\pi}_1}\bar{R_1}\otimes \bar{R_2}
\xrightarrow{\bar{\pi}_2}  K[\HK_{\Theta'}] / P$. Moreover, as
$\bar{R_1}\otimes \bar{R_2}\simeq M_{r_1+r_2}(K)$, $\bar{\pi}_2$
is an isomorphism. Then we can assume that $\pi: K[\HK_{\Theta'}]
\twoheadrightarrow M_{r}(K)$ where $r=r_1+r_2$.
If we denote $\pi_{1}=\pi|_{
R_1\otimes K}$ and $\pi_{2}=\pi|_{K\otimes R_2}$, then
$\ker(\pi_i)=P_i$ are maximal ideals in $R_i$.
It can be easily checked that $P=P_1\otimes R_2+R_1\otimes P_2$.

Conversely, if $P=P_1\otimes R_2+R_1\otimes P_2$ for maximal ideals
$P_1\triangleleft R_1$, $P_2\triangleleft R_2$, then $(R_1\otimes R_2) / P \simeq R_1/P_1 \otimes R_2/P_2$.
As $R_i$ are finitely generated PI-algebras it follows that
$ R_1/P_1 \otimes R_2/P_2\simeq M_{r_1}(K)\otimes M_{r_2}(K)\simeq M_{r_1+r_2}(K)$.
Therefore $P$ is indeed a maximal ideal in $K[HK_{\Theta'}]$.
\end{proof}

\noindent {\bf Acknowledgments.} I am very grateful to Jan Okni\'nski for suggesting this research topic, his continuous support and insightful comments on the earlier versions of the paper. This work was supported by grant 2016/23/B/ST1/01045 of the National Science Centre (Poland).

\vspace{40pt}

\begin{tabular}{lll}
      Magdalena Wiertel \\
    \texttt{M.Wiertel@mimuw.edu.pl} \\
    & & \\
    Institute of Mathematics & & \\
    University of Warsaw & & \\
    Banacha 2 & & \\
    02-097 Warsaw, Poland & &
\end{tabular}

\end{document}